\RequirePackage[english]{babel}
\documentclass[a4paper,leqno,twoside]{amsart}

\usepackage{amscd}
\usepackage{url}
\usepackage{tabularx}

%%%%%% theorems

\theoremstyle{plain}
\newtheorem{thm}{Theorem}
\newtheorem{cor}[thm]{Corollary}
\newtheorem{prop}[thm]{Proposition}
\newtheorem{lem}[thm]{Lemma}
\newtheorem*{conj}{Conjecture}
\newtheorem{pb}{Problem}

%%%%%% definitions

\theoremstyle{definition}

\newtheorem{ex}{Example}

%%%%%% remarks

\theoremstyle{remark}
\newtheorem{rmk}{Remark}
\newtheorem*{acks}{Acknowledgements}

%%%%%%%%%%%%%%%%%%   useful definitions

\newcommand{\ie}{\textit{i.e. }}

\newcommand{\p}{\mathbb{P}}

\newcommand{\TT}{\mathbb{T}}

\newcommand{\Gr}{\mathbb{G}}

\newcommand{\OO}{\mathcal{O}}

\newcommand{\II}{\mathcal{I}}

\newcommand{\abs}[1]{\lvert#1\rvert}

\newcommand{\gen}[1]{\langle#1\rangle}

%%%%%%%%%%%%%%%% operator names

\DeclareMathOperator{\rk}{rk}

\DeclareMathOperator{\Sing}{Sing}

\def\cocoa{{\hbox{\rm C\kern-.13em o\kern-.07em C\kern-.13em o\kern-.15em A}}}
\DeclareMathOperator{\PGL}{\mathbb{P}GL}

%%%%%%%%%%%%%%%%%%%%%%%

\begin{document}

\title[Quadratic normality]{Congruences of lines  in $\p^5$,
quadratic normality,
and completely exceptional Monge-Amp\`ere equations}
\author{Pietro De Poi \and Emilia Mezzetti}

\thanks{This research was partially supported by  MiUR,
project  ``Geometria delle variet\`a algebriche e dei loro spazi di moduli''
for both authors, by Regione Friuli Venezia Giulia,
``Progetto D4'' for the first author, and by Indam project
``Birational geometry of projective varieties'' for the second one}

\address{Dipartimento di Matematica e Informatica\\
Universit\`a degli Stud\^\i\ di Trieste\\
Via Valerio, 12/1\\
I-34127 Trieste\\
Italy}
\email{depoi@dmi.units.it} \email{mezzette@units.it}
\keywords{Congruences
of lines, quadratic normality, lifting problem,
completely exceptional Monge-Amp\`ere equations, Fano fourfolds}
\subjclass[2000]{Primary 14M15, 14J45; Secondary 53A25, 14N25, 14M07}

%\date{\today}

\begin{abstract}
The existence is proved of two new families of locally
Cohen-Macaulay sextic threefolds
in $\p^5$, which are not quadratically normal. These threefolds arise naturally
in the realm of first order congruences of lines as focal loci
and in the study of the
completely exceptional Monge-Amp\`ere equations.
One of these families comes from a smooth
congruence of multidegree $(1,3,3)$ which is a smooth
Fano fourfold of index two and
genus $9$.
\end{abstract}

\maketitle
\section*{Introduction}
In 1901 Francesco Severi \cite{Sev} proved his celebrated theorem saying that
the unique surfaces in $\p^5$ whose secant variety is strictly contained
in $\p^5$ are the Veronese surface and the cones.
It follows that if $X$ is a nondegenerate surface in $\p^4$,
then it is  \emph{linearly normal} (\ie $h^1(\mathcal{I}_X(1))= 0$)
unless  it is the (projected) Veronese surface.
Note that a projection of a non-degenerate cone
fails to be isomorphic in the vertex, so cones in $\p^4$ are
also linearly normal.
As an easy  corollary, it follows that
if $X$ is a nondegenerate threefold in $\p^5$, then it is linearly
normal.

An interesting feature of the projected Veronese surface $V$ is that
its trisecant lines form a \emph{linear congruence of lines} in $\p^4$,
\ie a $3$-dimensional linear section of $\Gr(1,4)$, the Grassmannian
of lines in $\p^4$. This allows to characterize $V$ as the degeneracy locus
of a general bundle morphism
$\phi\colon {\OO}_{ {\p}^4}^{\oplus 3}\to{\Omega}_{{\p}^4}(2)$  (\cite{O}).
It gives also the following $\Omega$-resolution for the ideal of $V$:
\begin{equation*}
0 \to  {\OO_{ {\p}^4}(-1)}^{\oplus 3}\to
{\Omega}_{{\p}^4}(1)\to\mathcal{I}_V(2)\to 0,
\end{equation*}
which immediately implies that none of the quadrics containing
the hyperplane section of $V$  lifts to a quadric containing $V$ (\cite{Cha}).

In $\p^5$ an analogous construction brings to
the definition of the \emph{Palatini scroll},
a smooth threefold $X$ defined as degeneracy locus of a
general morphism
$ {\OO}_{ {\p}^5}^{\oplus 4}\to{\Omega}_{{\p}^5}(2).$
$X$ results to be  \emph{not quadratically normal}, \ie
$h^1(\mathcal{I}_X(2))\neq 0$.

These analogies motivated the following question,
posed by Peskine and Van de Ven,
on the non necessarily smooth
threefolds in $\p^5$:
\begin{pb}\label{pb1}
Is the Palatini scroll the unique threefold in $\p^5$ which is not
quadratically
normal? If it is not the unique example,  classify such
threefolds
(see \cite{Sch}, where the question is ascribed to Peskine).
\end{pb}

More generally, one can ask to classify the codimension two subvarieties
$X$ in $\p^n$,  which are not $(n-3)$-normal (\ie
$h^1(\mathcal{I}_X(n-3))\neq 0$).

Subsequently, F. L. Zak in \cite{Z}
has generalized Problem~\ref{pb1}, posing
the following conjecture
about $j$-normality of non-degenerate projective
$m$-dimensional subvarieties  in $\p^n$.

\begin{conj}\label{cj:1} Let $i$ and $j$ be two integers such that
$i\ge 1$ and $j\ge 0$; then
\begin{enumerate}
\item $H^i(\p^n,\II_X(j))=0$ for $i+j<\frac{m}{n-m-1}$;
\item\label{eq:cj1} for $i+j=\frac{m}{n-m-1}$ it is possible to ``classify''
all the varieties for which $H^i(\p^n,\II_X(j))\neq 0$.
\end{enumerate}
\end{conj}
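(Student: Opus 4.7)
The plan is to recognize the statement as an open conjecture of Zak that generalizes classical Severi-type vanishing, and to outline how one might attempt it rather than claim a full proof. The natural starting point is the ideal-sheaf sequence
\begin{equation*}
0 \to \II_X(j) \to \OO_{\p^n}(j) \to \OO_X(j) \to 0,
\end{equation*}
which, combined with $H^i(\p^n,\OO(j))=0$ for $0<i<n$ and $j\ge 0$, reduces part (1) to two sub-statements: (a) surjectivity of the restriction $H^0(\OO_{\p^n}(j)) \to H^0(\OO_X(j))$ when $1+j<\frac{m}{n-m-1}$, and (b) vanishing of $H^{i-1}(X,\OO_X(j))$ for $i\ge 2$ in the stated range.

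For (b) the route I would follow is a Barth--Larsen / Le~Potier--Sommese style vanishing for small-codimension subvarieties, where the numerical bound $i+j<\frac{m}{n-m-1}$ is exactly of the shape dictated by the ratio $\frac{m}{\codim X -1}$; the main technical work is to sharpen the available vanishing theorems to match this boundary and to extend them to the locally Cohen--Macaulay (non-smooth) setting relevant to Problem~\ref{pb1}. For (a) the plan is an induction on $m$ via a general hyperplane section $X\cap H \subset \p^{n-1}$: the dimension drops by one, the codimension is unchanged, and the relevant ratio $\frac{m-1}{n-m-1}$ decreases, so the inductive hypothesis applied to the restriction sequence
\begin{equation*}
0 \to \II_X(j-1) \to \II_X(j) \to \II_{X\cap H,\,H}(j) \to 0
\end{equation*}
propagates surjectivity, provided one starts the induction with a suitable Castelnuovo--Mumford base case.

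Part (2), the classification on the boundary $i+j=\frac{m}{n-m-1}$, is structurally different: it demands an exhaustive list of exceptional varieties. My plan here is the one the paper itself begins to implement — produce the known examples (the Veronese surface for $(m,n)=(2,4)$, the Palatini scroll and the new sextic threefolds of this paper for $(m,n)=(3,5)$, etc.), and then, in each fixed pair $(m,n)$, attempt completeness through adjunction theory, liaison, or the geometry of associated congruences of lines. The main obstacle is that the conjecture is genuinely open: no unified proof of (1) is known beyond classical cases and partial smooth results, and (2) is effectively an evolving research programme whose shape is dictated by the examples one is able to construct — which is precisely the contribution announced in the abstract.
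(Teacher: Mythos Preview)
You have correctly identified the essential point: the statement is a \emph{conjecture} (due to Zak), and the paper does not prove it. It is stated purely as context and motivation; the paper's actual contribution lies elsewhere, namely in producing new boundary examples for the specific case $(m,n)=(3,5)$, $i=1$, $j=2$. There is therefore no ``paper's own proof'' to compare your proposal against.

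Your outline of a possible strategy --- reduction via the ideal-sheaf sequence, Barth--Larsen/Sommese-type vanishing for the higher cohomology, and an inductive hyperplane-section argument for $j$-normality --- is a reasonable sketch of where one would look, and you are honest that the main obstacle is that the conjecture is genuinely open. That is the correct assessment. One caution: the inductive step you describe for part (a) is more delicate than it may appear, since the ratio $\frac{m-1}{n-m-1}$ can drop below $1+j$ before the base case is reached, and the lCM (rather than smooth) hypothesis blocks the standard Barth--Larsen machinery; these are exactly the places where the conjecture resists known techniques. But since the paper makes no claim to a proof, there is no discrepancy to report.
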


Zak also suggested that examples on the boundary of the conjecture
could be constructed as focal loci of some \emph{first order congruences}
in $\p^n$ (a congruence is a flat family of lines of dimension $n-1$ and its order
is the number of its lines passing
through a general point of the space; for generalities on congruences, see \cite{DP6}).
As  we have seen in the case of $\p^4$, all the known examples are
varieties whose $l$-secant lines (with $l$
appropriate integer) give linear congruences, which are special examples
of congruences of order one.  In the Chow ring of $\Gr(1,n)$,
every congruence  is a linear combination with integer coefficients
of the Schubert cycles of dimension $n-1$.
The sequence of the coefficients $(a_0, a_1,\dotsc, a_k)$,
$k=[\frac{n-1}{2}],$  is called the multidegree of the congruence,
the first integer $a_0$ is precisely its order.
For instance,  the multidegree of a linear congruence
$B$ in $\p^5$ is $(1,3,2)$,
this means precisely that the  lines of $B$ contained
in a general hyperplane fill a hypersurface of degree $3$,
and that the number of lines contained in a general $\p^3$ is $2$.
Since  $B$ is formed precisely by the $4$-secant lines of its focal
locus $X$, a Palatini threefold in $\p^5$, we find in this way that
$X$ cannot be contained in a cubic hypersurface while,
conversely, its hyperplane section is.

This remark shows the connections with the \emph{lifting problem}:
given an integral $\nu$-dimensional variety $X\subset \p^n$,
it concerns finding conditions on $d,\nu, n$ and $\sigma$ so
that any degree $\sigma$ hypersurface in $\p^{n-1}$
containing the hyperplane section $S=X\cap H$ of $X$
($H$ a general hyperplane) lifts to a hypersurface
of degree $\sigma$ containing $X$.
In view of the  cohomology of the exact sequence of sheaves
\begin{equation*}
0\to\II_X(k)\xrightarrow{\cdot H} \II_X(k+1)\to\II_S(k+1)\to 0,
\end{equation*}
it is clear that, if $X$ is $k$-normal, then all hypersurfaces
of degree $k+1$ containing
$S$ lift to hypersurfaces of the same degree containing $X$ (but not
vice-versa). Instead, if $k+1$ is a \emph{non-lifting level}
for $X$ (\ie the restriction map $H^0(\mathcal{I}_X(k+1))\rightarrow
H^0(\mathcal{I}_{S}(k+1))$ is not onto for a general
hyperplane $H$), then $X$ is necessarily non $k$-normal.

The main results about the lifting problem have been obtained in the case
of  subvarieties of codimension two. Laudal \cite{lau},
Gruson-Peskine \cite{gp},
Strano \cite{str} solved the problem in the case of curves in $\p^3$,
whereas the case of higher dimensional varieties was studied,
among others, by Mezzetti-Raspanti \cite{MR},
Mezzetti \cite{Mez2}, \cite{Mez},  Chiantini-Ciliberto \cite{CC},
Roggero \cite{Rug1}, \cite{Rug}, \cite{rug}.

The  result  for threefolds in $\p^5$  in the case $\sigma=3$ is the following
(\cite{Mez},
\cite{rug}):

\begin{thm}
If $X$ is a \emph{locally Cohen-Macaulay}
(\emph{lCM} for short in the following)
integral threefold in $\p^5$, if $\deg(X)>7$ and
$h^0(\mathcal{I}_{X\cap H}(3))\neq 0$, then
$h^0(\mathcal{I}_{X}(3))\neq 0$. Moreover the only
lCM threefolds of degree $7$ with
$\sigma=3$  for which
  the assertion is not true are the
Palatini threefold and its degenerations.
\end{thm}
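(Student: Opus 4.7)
\emph{Proof proposal.} I would proceed by contradiction via descent to successive hyperplane sections, following the Laudal--Gruson--Peskine--Strano framework, with a separate analysis of the boundary degree $d=7$.

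First, assume for contradiction that $X$ is an lCM integral threefold in $\p^5$ of degree $d>7$ with $h^0(\II_S(3))\neq 0$ for $S=X\cap H$, but $h^0(\II_X(3))=0$. From the hyperplane sequence
\[
0 \to \II_X(2) \xrightarrow{\cdot H} \II_X(3) \to \II_S(3) \to 0,
\]
any cubic $F\subset H$ through $S$ yields a nonzero obstruction $\alpha\in H^1(\II_X(2))$, so $X$ fails to be quadratically normal. Cutting with two further general hyperplanes produces an integral lCM curve $C\subset\p^3$ of degree $d$ whose plane section $\Gamma\subset\p^2$ is a set of $d$ points (which, for $d\leq 9$, automatically lies on a plane cubic, and for larger $d$ still does thanks to the hypothesis on $S$ descending to $C$).

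The descent step invokes the Strano lifting theorem for curves in $\p^3$: an integral lCM curve of degree $d>\sigma^2-\sigma+1$ whose plane section lies on a curve of degree $\sigma$ must itself lie on a surface of degree $\sigma$. Applying this with $\sigma=3$ and $d>7$ gives a cubic surface $G\subset\p^3$ through $C$. The subtle part is the \emph{reverse lifting} back up the tower: from $G\supset C$ one must construct a cubic in $\p^4$ through the intermediate surface section, and then a cubic in $\p^5$ through $X$, contradicting $h^0(\II_X(3))=0$. At each stage the obstruction lies in a degree $2$ piece of the Hartshorne--Rao module of the relevant section, and must be shown to vanish by comparing the ranks of the multiplication-by-$H$ maps, using the lCM hypothesis to control depth and the bound $d>7$ to control speciality via the Castelnuovo--Halphen type estimates. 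I expect this reverse-lifting step to be the main obstacle, since it requires fairly detailed control of the Betti tables of the intermediate curve and surface sections and, in particular, of the cokernel of the natural multiplication map $H^0(\OO_H(1))\otimes H^1(\II_X(2))\to H^1(\II_X(3))$.

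For the boundary case $d=7$ the chain of estimates becomes an equality, forcing the shape of the minimal $\Omega$-resolution of $\II_X$: the resulting numerical invariants coincide with those of the degeneracy locus of a general morphism $\OO_{\p^5}^{\oplus 4}\to\Omega_{\p^5}(2)$. One then concludes by showing directly that any lCM integral threefold of degree $7$ with these invariants is either a Palatini scroll or a flat specialisation thereof, reconstructing the defining morphism from the Hartshorne--Rao module and invoking the uniqueness of the resolution up to the natural $\PGL$-action.
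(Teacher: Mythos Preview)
The paper does not prove this theorem at all: it is quoted as a known result from the literature, with the citation ``(\cite{Mez}, \cite{rug})'' immediately preceding the statement, and later (after Theorem~\ref{thm:lifting}, of which this is the case $\sigma=3$, $\dim X=3$) the authors simply write ``For the proof of Theorem~\ref{thm:lifting}, we refer to the papers cited in the introduction.'' So there is nothing in the paper to compare your argument with beyond the statement of the general bounds in Theorem~\ref{thm:lifting}.

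That said, your sketch has a concrete gap. The curve bound you invoke is wrong: the Laudal--Gruson--Peskine--Strano result for integral curves in $\p^3$ gives lifting only for $d>\sigma^2+1$ (this is exactly item~(1) of Theorem~\ref{thm:lifting}), not $d>\sigma^2-\sigma+1$. For $\sigma=3$ that means $d>10$, so descending to the curve section gives you nothing in the critical range $d=8,9,10$. The bounds in Theorem~\ref{thm:lifting} become \emph{stricter} as the dimension goes up ($\sigma^2+1$, then $\sigma^2-\sigma+2$, then $\sigma^2-2\sigma+4$), which is precisely why the threefold case cannot be obtained by naive descent to curves followed by ``reverse lifting''. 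You correctly flag the reverse-lifting step as the main obstacle, but that step \emph{is} the theorem: the actual proofs in \cite{Mez} proceed via differential-geometric methods (analysis of the second fundamental form and focal schemes of the associated linear system), and Roggero's arguments in \cite{Rug1,Rug,rug} are also dimension-specific, not reductions to the curve case. Your description of the $d=7$ boundary analysis via the $\Omega$-resolution is closer in spirit to what is done in \cite{Mez}, but again the paper itself only cites this.
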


Our purpose here is to complete this result finding
all the lCM threefolds  with $h^0(\mathcal{I}_{X\cap H}(3))\neq 0$
and $h^0(\mathcal{I}_{X}(3))= 0$.

We prove:

\begin{thm}\label{teo1}
Let $X$ be a non-degenerate lCM integral threefold of $\p^5$ of
degree $d\leq 6$ and (arithmetic) sectional genus $\pi$. Then:
\begin{enumerate}
\item if either $d\leq 5$ or $d=6$ and $\pi\neq 1,2$,
$X$ is contained in a quadric or in a cubic and is $2$-normal;
\item if $d=6$ and $\pi=2$, $X$ is contained in a quadric or in a cubic and
$h^1(\II_X(2))\leq 1$.
\end{enumerate}
\end{thm}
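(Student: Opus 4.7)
My plan is to follow the classical descending strategy of the lifting problem, passing to general hyperplane sections. Let $S = X \cap H$ and $C = S \cap H'$ be two successive general hyperplane sections; by Bertini and the lCM hypothesis, $S$ is an integral nondegenerate lCM surface of degree $d$ in $\p^4$ and $C$ is an integral nondegenerate lCM curve of degree $d$ and arithmetic genus $\pi$ in $\p^3$. Since $X$ is nondegenerate in $\p^5$, the Severi-type corollary recalled in the introduction gives $h^1(\II_X(1)) = 0$, and the same argument yields $h^1(\II_S(1)) = 0$ except in the projected-Veronese case, which occurs only for $d = 4$ and will be disposed of separately.

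I would then perform a case-by-case analysis on the pair $(d, \pi)$ with $d \leq 6$, using the classical classification of integral lCM curves of small degree in $\p^3$ (the twisted cubic, the rational and elliptic quartics, the quintics of genus $0, 1, 2$, and the sextics of genus $0, 1, 2, 3, 4$). For each type I would determine $h^0(\II_C(k))$ and $h^1(\II_C(k))$ for $k = 2, 3$, relying on Castelnuovo-type bounds and the standard biliaison descriptions. The outcome I expect is that $h^0(\II_C(2))$ or $h^0(\II_C(3))$ is nonzero in every case, that $h^1(\II_C(2)) = 0$ outside the subcase $d = 6$, $\pi = 2$, and that in that subcase $h^1(\II_C(2)) \leq 1$.

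Armed with this, I would propagate the quadric or cubic upwards through the two hyperplane-section sequences
\begin{equation*}
0 \to \II_S(k) \to \II_S(k+1) \to \II_C(k+1) \to 0,
\end{equation*}
\begin{equation*}
0 \to \II_X(k) \to \II_X(k+1) \to \II_S(k+1) \to 0.
\end{equation*}
The surface step would use the lifting criteria of Mezzetti--Raspanti and Chiantini--Ciliberto cited in the introduction. The threefold step cannot appeal to the Mezzetti--Roggero theorem recalled above, since that requires $d \geq 8$; instead, I would argue directly, computing $\chi(\II_X(k))$ from Riemann--Roch in terms of $d$ and $\pi$ and combining it with the $h^0(\II_S(k))$ determined previously to force the lift. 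Chasing $H^1$ through the same two sequences then yields
\begin{equation*}
h^1(\II_X(2)) \leq h^1(\II_S(2)) \leq h^1(\II_C(2)),
\end{equation*}
which gives $2$-normality in case (1) and the bound $h^1(\II_X(2)) \leq 1$ in case (2).

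The main obstacle I foresee is the case $d = 6$, $\pi = 2$. Sextic curves of arithmetic genus two in $\p^3$ split into several families depending on the minimal surface containing them (quadric cone, smooth quadric with bidegrees $(1,5)$ or $(2,4)$, or cubic surface), and precisely these are the candidates producing $h^1(\II_C(2)) = 1$. The delicate point is to verify that this single unit of cohomology propagates up to $X$ in the new examples announced in the abstract, while never exceeding $1$ in the lifting, so that the bound $h^1(\II_X(2)) \leq 1$ is both correct and sharp.
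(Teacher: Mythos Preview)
Your overall strategy---descend to the curve section $C\subset\p^3$, compute $h^i(\II_C(k))$, then propagate upward through the two hyperplane-section sequences---matches the paper's approach for the positive-genus cases, and your treatment of $d=6$, $\pi=2$ is essentially what the paper does (the paper actually descends one step further to the six points $\Gamma$ in $\p^2$, but the idea is the same).

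However, there is a genuine gap in the sectional-genus-zero cases. Your expected outcome that $h^1(\II_C(2))=0$ outside $d=6$, $\pi=2$ is false. For a nondegenerate integral curve $C\subset\p^3$ of degree $d$ and arithmetic genus $\pi$, Riemann--Roch on $C$ gives
\[
h^0(\II_C(2))-h^1(\II_C(2))=10-(2d+1-\pi),
\]
so for $d=5$, $\pi=0$ one has $h^1(\II_C(2))\geq 1$, and for $d=6$, $\pi=0$ one has $h^1(\II_C(2))\geq 3$. Your chain $h^1(\II_X(2))\leq h^1(\II_S(2))\leq h^1(\II_C(2))$ therefore gives no information whatsoever about $2$-normality of the projected rational scrolls, and the Riemann--Roch computation of $\chi(\II_X(k))$ you propose will not rescue this, since it does not control the higher cohomology.

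The paper handles the scroll case by an entirely different device (its Proposition~\ref{prop:rns} via Lemma~\ref{lem:knorm}): rather than using the hyperplane section of the surface $Y\subset\p^4$, it chooses a rational curve $C'\subset Y$ of degree $7$ in the linear system $|C_0+(b_1+9-n)f|$, shows that $C'$ is $2$-normal and not contained in a quadric, and then invokes a lemma to the effect that $k$-normality of such a subvariety of higher degree forces $k$-normality of $Y$. This is the missing ingredient in your plan; without some argument of this type, the genus-zero cases of part~(1) remain open.
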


\begin{thm}\label{teo2}
There exist two families of lCM threefolds, non singular in codimension one
of degree $6$ in $\p^5$
with sectional genus one, such that $h^1(\II_X(2))=1$ and
$h^0(\mathcal{I}_{X\cap H}(3))\neq 0$, but
$h^0(\mathcal{I}_{X}(3))= 0$.
\end{thm}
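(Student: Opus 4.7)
The plan is to realize both families as focal loci of first-order congruences of lines in $\p^5$ and then verify the numerical and cohomological conditions directly. For a first-order congruence in $\p^5$ the focal scheme cut out on a general line has length $4$, so the focal threefold $X$ is quadrisecant to every line of the congruence—this observation drives both the numerical invariants and the non-liftability argument.

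For the first family I would take $B_1 \subset \Gr(1,5)$ to be a smooth first-order congruence realized as a Fano fourfold of index $2$ and genus $9$; a concrete model, for instance as a linear section of a classical homogeneous variety, makes the Schubert-calculus computation of the multidegree $(1,3,3)$ explicit. The focal locus $X_1$ appears as the degeneracy scheme of the differential of the birational projection from the universal family in $B_1$ to $\p^5$, and standard Chern-class formulae (compare \cite{DP6}) yield $\deg X_1 = 6$ with arithmetic sectional genus $\pi = 1$. For the second family I would use a distinct first-order congruence of the same multidegree $(1,3,3)$ (e.g.\ a singular or otherwise degenerate one), whose focal locus has the same numerical invariants but differs in its geometric structure. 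In both cases lCM-ness and non-singularity in codimension one follow from the determinantal nature of the focal locus together with a codimension estimate on the strata where the degeneracy fails to be transversal.

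For the cohomology the main tool is the hyperplane restriction sequence
\[
0 \to \II_X(k) \xrightarrow{\cdot H} \II_X(k+1) \to \II_{X\cap H}(k+1) \to 0
\]
for $k=1,2$. A Hilbert-polynomial computation for a lCM threefold of degree $6$ and sectional genus $1$ determines $h^0(\OO_X(j))$ for $j=2,3$, hence upper bounds for $h^0(\II_X(j))$. The non-vanishing $h^0(\II_{X\cap H}(3))\neq 0$ is obtained by a dimension count on a surface of degree $6$ and sectional genus $1$ in $\p^4$; equivalently, one exhibits a cubic through $X\cap H$ built from the $a_1=3$ lines of $B$ contained in a general hyperplane.

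The main obstacle is the non-liftability statement $h^0(\II_X(3))=0$. Suppose for contradiction that a cubic $F$ contained $X$. Since each line $\ell$ of $B$ meets $X$ in length $4$ and a line $\ell\not\subset F$ satisfies $\ell\cdot F=3$, every line of $B$ would lie in $F$, forcing $B$ into the Fano scheme of lines of $F$. The sought contradiction then comes from comparing the congruence geometry (a Fano fourfold of index $2$ and genus $9$ in the first family, and its analogue in the second) with the Fano variety of lines of a cubic fourfold: the two carry incompatible Schubert multidegrees and numerical characters. The delicate part is carrying this rigidity through the second (singular) family, where the focal structure is more intricate and ad hoc geometric arguments may be needed to rule out containment in any cubic. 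Once non-liftability is established, combining with the Hilbert-polynomial count forces $h^1(\II_X(2))=1$, completing the theorem.
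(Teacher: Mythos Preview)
Your instinct is right---both families arise as focal loci of first-order congruences in $\p^5$, and the quadrisecant property is what prevents $X$ from lying on a cubic---but the identification of the two families is off, and your non-liftability argument is needlessly hard. In the paper only \emph{one} family comes from a congruence of multidegree $(1,3,3)$, namely the one realised as the smooth Fano fourfold of index~$2$ and genus~$9$ (obtained as the residual of $\Gr(1,L)\cup B''$ in a quadratic section of the $5$-fold $\Gamma=\check\pi_t\cap\Gr(1,5)$). The other family comes from a special \emph{linear} congruence of multidegree $(1,3,2)$: one intersects $\Gamma$ with a general hyperplane, and the focal locus is reducible as $X\cup L'$ with $L'$ a parasitical scheme supported on a $\p^3$; the sextic $X$ is the pure focal component, singular only along a twisted cubic. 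There is no second $(1,3,3)$ congruence in the construction, so your ``singular or otherwise degenerate'' variant is not what is meant.

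For $h^0(\II_X(3))=0$, comparing $B$ with the Fano variety of lines on a cubic fourfold is both unnecessary and unlikely to succeed directly (that Fano variety is itself $4$-dimensional, so no dimension obstruction rules out an inclusion). The correct argument is immediate: if a cubic $F$ contained $X$, every line of $B$, being $4$-secant to $X\subset F$, would lie in $F$; since $B$ has order one its lines sweep out all of $\p^5$, forcing $F=\p^5$. Likewise $h^0(\II_{X\cap H}(3))\neq 0$ is simply the interpretation of the second multidegree $a_1=3$: the lines of $B$ lying in a general hyperplane $H$ fill a cubic hypersurface of $H$ containing $X\cap H$. Finally, the lCM property, the Hilbert polynomial $t^3+3t^2+2$, and the description of the singular locus are verified in the paper by explicit Macaulay2 computation of the resolution on representative members (checking that the relevant Fitting ideal is irrelevant), not by a general determinantal transversality argument; your sketch of that step would need to be made concrete.
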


Our examples give an answer in the negative to Problem \ref{pb1}, at
least if one allows singularities.
In fact, both examples are singular and non singular in codimension
$1$. A threefold of the first family is obtained from a particular
case of a linear congruence of lines, the focal locus is reducible
in two components, one of them is a \emph{parasitical} $\p^3$ (see
Section~\ref{sec:2}). The second family comes from the congruences
which are associated to the completely exceptional Monge-Amp\`ere
systems of differential equations. The existence of this family
was suggested by Agafonov-Ferapontov:  in their article \cite{AF2}
they introduce a surprising construction which allows to associate
to a system of PDE of conservation laws in $n-1$ variables a
congruence of lines $B$ in $\p^n$. If the system is of Temple
class (also called \emph{T-system}, see Section~\ref{sec:3} for
the definition), then $B$ has order one, and the focal locus
is a subvariety $X$ in $\p^n$ of codimension two, such that the
lines of $B$ through a general point of $X$ form a planar
pencil of lines (see \cite{dm}). If $n\leq 4$, it is proved in
\cite{AF2} that a congruence associated to a T-system is always
linear. The example we exhibit shows that this is no longer true
in $\p^5$, indeed we get a congruence of multidegree $(1,3,3)$. It
results to be an irreducible component of a (special) quadratic
congruence, \ie of a subvariety of $\Gr(1,5)$, which is cut by
three hyperplanes and one quadric. This congruence, seen as a
subvariety of the Grassmannian, is also interesting from other
points of view. It is a new example of a smooth variety of
dimension $4$ covered by lines, such that the number of lines
passing through its general point is $4$.
It results to be a Fano fourfold of index two and genus $9$.
This was studied by Mukai in \cite{Mu}, who had given an embedding
of it in the Grassmannian of $2$-planes in $\p^5$.
We note that Mukai's embedding is obtained by showing the existence
of a ``good'' vector bundle of rank three on the Fano variety,
while our embedding comes from a rank two vector bundle.

We also show that our examples exhaust all the possibilities
for integral threefolds coming from congruences of type $(1,3, a_2)$,
although we do not exclude the possibility of having more than two
families in Theorem \ref{teo2}.

This article is structured as follows: in Section~\ref{sec:1},
we prove Theorem~\ref{teo1}
about threefolds of degree $\leq 6$ in $\p^5$.
In Section~\ref{sec:2} we construct the congruences giving rise to
the two families of non-$2$-normal threefolds of degree $6$
and sectional genus $1$.
Finally in Section~\ref{sec:3} we explain the connections with the
completely exceptional Monge-Amp\`ere equations.

\begin{acks}
We wish to thank Joseph~Landsberg and
Dario~Portelli   for interesting discussions.
\end{acks}
%%%%%%%%%%%%%%%%%%%%%%%

\section{Threefolds of degree $\leq 6$ in $\p^5$}\label{sec:1}

In this section $X$ will always denote a locally Cohen-Macaulay
(lCM for short) threefold of $\p^5$.

If $\deg X\le 4$,  then $X$ is contained in a
quadric (\cite{Rug}). This implies that it is arithmetically Cohen-Macaulay
(see \cite{KW}, Theorem~2.1).
It remains to consider the cases $d=5,6$.

The following simple lemma will allow us to prove the $2$-normality
of the projections of some rational normal scrolls.

\begin{lem}\label{lem:knorm}
Let $X\subset\p^n$ be a (nondegenerate) projective variety
and  $Y\subset X$ a codimension one subvariety.
If $Y$ is $k$-normal and one of the following assumptions is satisfied:
\begin{enumerate}
\item\label{hyp} $Y$ is a
hyperplane section of $X$ and $X$ is $(k-1)$-normal,
\item\label{mag} $Y$ is such that $\deg(Y)>\deg (X)$ and there does not
exist any hypersurface $V$ of degree $\le k$ which contains $Y$ but not $X$,
\end{enumerate}
then $X$ is $k$-normal too.
\end{lem}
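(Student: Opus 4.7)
\emph{Plan.} The common tool for both parts is the ideal-sheaf exact sequence
\begin{equation*}
0 \to \mathcal{I}_X(k) \to \mathcal{I}_Y(k) \to \mathcal{I}_{Y/X}(k) \to 0
\end{equation*}
on $\p^n$, where $\mathcal{I}_{Y/X} := \mathcal{I}_Y/\mathcal{I}_X$ is the ideal of $Y$ viewed inside $X$. The hypothesis that $Y$ is $k$-normal forces $H^1(\mathcal{I}_Y(k))=0$, which truncates the resulting long exact cohomology sequence at the right end.

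For case (1), I prefer instead the hyperplane-section sequence obtained by tensoring $0 \to \mathcal{O}_{\p^n}(-1) \to \mathcal{O}_{\p^n} \to \mathcal{O}_H \to 0$ with $\mathcal{I}_X(k)$, which stays exact because $X \not\subset H$:
\begin{equation*}
0 \to \mathcal{I}_X(k-1) \to \mathcal{I}_X(k) \to \mathcal{I}_{Y,H}(k) \to 0.
\end{equation*}
Its long exact cohomology contains $H^1(\mathcal{I}_X(k-1)) \to H^1(\mathcal{I}_X(k)) \to H^1(H, \mathcal{I}_{Y,H}(k))$; the left group vanishes by the $(k-1)$-normality of $X$ and the right by the $k$-normality of $Y$ inside $H \simeq \p^{n-1}$, so $H^1(\mathcal{I}_X(k))=0$ at once.

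For case (2), the containment hypothesis translates into $H^0(\mathcal{I}_Y(j)) = H^0(\mathcal{I}_X(j))$ for every $j \le k$. So any degree-$k$ form vanishing on $Y$ already lies in $\mathcal{I}_X$ and thus restricts to zero on $X$; this makes the map $H^0(\mathcal{I}_Y(k)) \to H^0(\mathcal{I}_{Y/X}(k))$ vanish identically, and together with $H^1(\mathcal{I}_Y(k))=0$ the long exact sequence yields an isomorphism
\begin{equation*}
H^1(\mathcal{I}_X(k)) \cong H^0(\mathcal{I}_{Y/X}(k)).
\end{equation*}
Everything then reduces to $H^0(\mathcal{I}_{Y/X}(k)) = 0$. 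A nonzero section would be an $s \in H^0(\mathcal{O}_X(k))$ vanishing on $Y$; restricting to a generic complete intersection curve $C = X \cap H_1 \cap \cdots \cap H_{\dim X - 1}$ of degree $\deg X$ gives a section of $\mathcal{O}_C(k)$ of total degree $k\deg X$ vanishing on the $\deg Y$ points of $Y \cap C$. The inequality $\deg Y > \deg X$ rules out $s \ne 0$ on the nose when $k=1$; for $k \ge 2$ one iterates this analysis, using the containment hypothesis at levels below $k$ to exclude that the residual divisor $Z(s)-Y$ on $X$ be cut out by a global hypersurface of strictly lower degree.

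The main obstacle is exactly this last step in case (2): the bare degree estimate on a curve section is sharp only at $k=1$, so for $k \ge 2$ one has to package the vanishing inductively and fully exploit the non-existence of separating hypersurfaces of all degrees $\le k$, rather than just at degree $k$.
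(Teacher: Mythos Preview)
Your treatment of case~(1) is correct and coincides with the paper's.

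For case~(2) you take a genuinely different route. You use the short exact sequence $0\to\mathcal{I}_X\to\mathcal{I}_Y\to\mathcal{I}_{Y/X}\to 0$ and correctly reduce the statement to the vanishing $H^0(\mathcal{I}_{Y/X}(k))=0$, i.e.\ to the nonexistence of a nonzero section of $\mathcal{O}_X(k)$ vanishing on $Y$. The paper instead argues via the $k$-uple Veronese embedding: if $X$ were not $k$-normal then $v_k(X)\subset\p^M$ (with $\p^M=\langle v_k(X)\rangle$) would be the isomorphic projection of a nondegenerate $\tilde X\subset\p^{M+1}$; the hypothesis on separating hypersurfaces forces $\langle v_k(Y)\rangle=\langle v_k(X)\rangle=\p^M$, and one then asks whether the lift $\tilde Y\subset\tilde X$ spans all of $\p^{M+1}$ (which would contradict the $k$-normality of $Y$) or only a hyperplane $\Pi\subset\p^{M+1}$; in the latter case $\tilde Y\subset\Pi\cap\tilde X$, and the paper appeals to the degree inequality to conclude. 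The two packagings are in fact closely related: a hyperplane $\Pi$ not through the projection center corresponds precisely to a section of $\mathcal{O}_X(k)$ not in the image of $H^0(\mathcal{O}_{\p^n}(k))$ and vanishing on $Y$, which is exactly a nonzero element of your $H^0(\mathcal{I}_{Y/X}(k))$.

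However, as you yourself flag, your argument for $H^0(\mathcal{I}_{Y/X}(k))=0$ is not complete when $k\ge 2$. A nonzero $s$ would cut out a divisor of degree $k\deg X$ on $X$ containing $Y$, and the bare inequality $\deg Y>\deg X$ does not preclude this. Your proposed fix via the residual $Z(s)-Y$ does not close the gap: that residual is an effective divisor on $X$, but nothing forces it to be (or not to be) the trace of a global hypersurface of $\p^n$, so the hypothesis ``no hypersurface of degree $<k$ contains $Y$ but not $X$'' gives you no direct handle on it. In short, the reduction is clean but the last step is missing; the paper's Veronese argument supplies an alternative endgame by working with the hyperplane section $\Pi\cap\tilde X$ inside $\p^{M+1}$ rather than with divisors on $X$.
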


\begin{proof}
Case~\eqref{hyp} follows immediately from the cohomology sequence of
\begin{equation*}
0\to\II_X(k-1)\xrightarrow{\cdot H} \II_X(k)\to\II_Y(k)\to 0.
\end{equation*}

Let us now assume the hypothesis of case~\eqref{mag}.
Consider the $k$-tuple embedding $v_k(X)$ of $X$ in $\p^N$,
$N:=\binom{n+k}{k}-1$. Its linear span $\gen{v_k(X)}$ is a $\p^M$, $M\le N$.
If $X\subset\p^n$ is not $k$-normal, then $v_k(X)$ is not linearly normal, \ie
there exist a linear projection
$\pi\colon \p^{M+1}\dashrightarrow \p^M$ and a nondegenerate variety
$\tilde{X}\subset\p^{M+1}$ such that $\pi\mid_{\tilde{X}}$
is an isomorphism onto $v_k(X)$.
Therefore, for each subvariety $Y$ of $X$,
there exists a subvariety $\tilde{Y}\subset\tilde{X}$ such
that  $\pi\mid_{\tilde{Y}}$
is an isomorphism onto $v_k(Y)$.
In particular, if $\dim(\gen{\tilde{Y}})>\dim(\gen{v_k(Y)})$, $Y$ is not
$k$-normal.

We observe that, in general, if $Z\subset \p^n$ is a nondegenerate variety,
then  $v_k(Z)\subset \p^N$ is degenerate  (\ie $M<N$) if and only if
there exists a hypersurface of degree $k$, $V\subset\p^n$, such that
$Z\subset V$.

Assumption~\eqref{mag} implies that
$\gen{v_k(X)}=\gen{v_k(Y)}$. If now we suppose that $X$ is not $k$-normal,
there exists a $\tilde{Y}$ as above, and
$\deg(\tilde{Y})=\deg(v_k(Y))>\deg(v_k(X))=\deg(\tilde{X})$. Then,
either $\gen{\tilde{X}}=\gen{\tilde{Y}}$ and
$\gen{v_k(X)}=\gen{v_k(Y)}$ and $Y$ would not be $k$-normal, or
$\gen{\tilde{Y}}$ is a hyperplane in $\p^{M+1}$; but this cannot
occur, by the assumption on the degree, since $\tilde{Y}\subset
\gen{\tilde{Y}}\cap \tilde{X}$.
\end{proof}

In the next two theorems we collect the connections between lifting problem and $k$-normality and the known results about the lifting
problem for varieties of dimension $\leq 3$. We will use the
following notation: if $X$ is a projective variety, we put
\begin{equation*}
s(X):=\min\{k\mid h^0(\II_X(k))\neq 0\}
\end{equation*}
and
\begin{equation*}
\sigma(X):=\min\{k\mid h^0(\II_{X\cap H}(k))\neq 0\}
\end{equation*} where $H$ is a general hyperplane.  We will say that an integer number
$h$ is a \emph{non-lifting level}
 for $X$ if the restriction map $H^0(\mathcal{I}_X(h))\rightarrow
H^0(\mathcal{I}_{X\cap H}(h))$ is not surjective, when $H$ is a general
hyperplane.

\begin{thm}\label{thm:connections}
Let $X\subset \p^n$ be an integral variety  and $S$ be a general hyperplane section of $X$. Let $k\geq 1$ be an integer number.
\begin{enumerate}
\item If $X$ is $k$-normal, then all hypersurfaces of degree $k+1$ in $\p^{n-1}$ containing $S$ lift to a hypersurface of degree $k+1$ in $\p^n$ containing $X$;
\item if $k+1$ is a non-lifting level for $X$, then $X$ is not $k$-normal.
\end{enumerate}
\end{thm}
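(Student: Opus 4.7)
The plan is to deduce both parts from the long exact sequence in cohomology associated to the short exact sequence of ideal sheaves
\begin{equation*}
0\to\II_X(k)\xrightarrow{\cdot H} \II_X(k+1)\to\II_S(k+1)\to 0,
\end{equation*}
where $H$ is a general hyperplane and $S=X\cap H$. This sequence is already used in the discussion of the lifting problem in the introduction, so the argument is essentially a packaging of that observation.

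Taking global sections, I get the exact piece
\begin{equation*}
H^0(\II_X(k+1))\xrightarrow{r} H^0(\II_S(k+1))\to H^1(\II_X(k))\to H^1(\II_X(k+1)),
\end{equation*}
where $r$ is the restriction map. For part (1), the hypothesis is $H^1(\II_X(k))=0$; exactness immediately forces the restriction map $r$ to be surjective, which is precisely the statement that every degree $k+1$ hypersurface in $\p^{n-1}$ containing $S$ is the restriction of a degree $k+1$ hypersurface in $\p^n$ containing $X$.

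Part (2) is just the contrapositive of (1): if $r$ fails to be surjective (so $k+1$ is a non-lifting level), then $H^1(\II_X(k))$ cannot vanish, hence $X$ is not $k$-normal. There is no real obstacle here; the only point to be careful about is that $H$ should be sufficiently general so that multiplication by $H$ is injective on $\II_X$, which is automatic since $X$ is integral and $H$ is not a component of $X$. The statement is essentially a formal consequence of the ideal sheaf sequence and would serve as a convenient reference for the subsequent sections.
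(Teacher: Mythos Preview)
Your proof is correct and follows exactly the same approach as the paper, which simply states that the result ``follows immediately from the cohomology of the exact sequence $0\to\II_X(k)\xrightarrow{\cdot H} \II_X(k+1)\to\II_S(k+1)\to 0$.'' Your version is in fact more explicit than the paper's, spelling out the relevant piece of the long exact sequence and noting that part~(2) is the contrapositive of part~(1).
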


\begin{thm}\label{thm:lifting}
Let $X\subset\p^{n+2}$ ($n\ge 1$) be a nondegenerate lCM
projective variety of codimension $2$ and degree $d$.

If $\sigma:=\sigma(X)<s(X)$,
then
\begin{enumerate}
\item if $\dim(X)=1$,  $d\le \sigma^2+1$;
\item if $\dim(X)=2$,  $d\le \sigma^2-\sigma+2$;
\item if $\dim(X)=3$,  $d\le \sigma^2-2\sigma+4$.
\end{enumerate}
\end{thm}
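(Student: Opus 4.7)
The plan is to treat the three cases in increasing order of dimension, with the curve case (the classical Laudal--Gruson--Peskine--Strano Lemma) serving as the foundational input to which the higher-dimensional cases reduce by general hyperplane sections.

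For $\dim X = 1$, let $C \subset \mathbb{P}^3$ be an integral curve of degree $d$ with $\sigma := \sigma(C) < s(C)$. A general plane section $S = C \cap H$ consists of $d$ points of $\mathbb{P}^2$ lying on a plane curve $F_0$ of degree $\sigma$, in uniform position by Harris' theorem. I would analyze the Hilbert function of $S$ on $F_0$ and exploit the fact that $F_0$ cannot be the restriction of a surface of degree $\sigma$ through $C$ (since $s(C) > \sigma$), in order to force $d \le \sigma^2 + 1$.

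For $\dim X \in \{2,3\}$, take a general hyperplane section $Y = X \cap H$, which is lCM of codimension $2$ in $H \cong \mathbb{P}^{n+1}$, of degree $d$, with $s(Y) = \sigma(X) = \sigma$. The decisive step is a \emph{descent lemma}: from the cohomology of
\begin{equation*}
0 \to \mathcal{I}_X(\sigma-1) \to \mathcal{I}_X(\sigma) \to \mathcal{I}_Y(\sigma) \to 0,
\end{equation*}
combined with $s(X) > \sigma$, the degree-$\sigma$ hypersurface $F \subset H$ through $Y$ does not extend to $\mathbb{P}^{n+2}$; a Noether-type uniqueness argument should then force $\sigma(Y) < s(Y) = \sigma$, so that iterating reduces to the curve case. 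The stated bounds $\sigma^2 - \sigma + 2$ and $\sigma^2 - 2\sigma + 4$ would follow by carefully accounting for the drop of $\sigma$ at each cutting step, combined with an extra ``vertical'' saving coming from the family of non-liftable hypersurfaces through successive hyperplane sections of $X$.

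The hardest step will be the sharp numerical bookkeeping. A naive cut-and-apply-Laudal iteration yields only $d \le \sigma^2 + 1$ in every dimension, so to obtain the improvements $-\sigma + 2$ (surfaces) and $-2\sigma + 4$ (threefolds) one must control quantitatively how the lifting failure propagates down successive hyperplane sections. This propagation is the genuine technical content of the papers of Mezzetti--Raspanti, Mezzetti, Chiantini--Ciliberto and Roggero listed in the introduction, and assembling their sharpest forms is what produces the stated inequalities.
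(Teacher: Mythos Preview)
The paper does not give a proof of this theorem: immediately after stating it, the authors write ``For the proof of Theorem~\ref{thm:lifting}, we refer to the papers cited in the introduction'' (namely Laudal, Gruson--Peskine, Strano for curves, and Mezzetti--Raspanti, Mezzetti, Chiantini--Ciliberto, Roggero for the higher-dimensional cases). So there is nothing to compare your argument against beyond those references.

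Your sketch is consistent with the strategy of that literature: the curve bound is the classical trisecant lemma, and the surface and threefold bounds are obtained by a careful analysis of how the non-lifting condition propagates under successive hyperplane sections. You are also right that the naive iteration gives only $d\le\sigma^2+1$ in every dimension, and that the genuine work---producing the sharpened bounds $\sigma^2-\sigma+2$ and $\sigma^2-2\sigma+4$---lies in the quantitative propagation arguments of the cited papers. One small correction: your ``descent lemma'' as stated (that $\sigma(Y)<s(Y)$ for the hyperplane section $Y$) is precisely what can fail, and the sharper bounds arise exactly from analysing the border case $\sigma(Y)=s(Y)$ rather than from avoiding it; but you essentially acknowledge this in your final paragraph. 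In short, your proposal and the paper agree: both defer the actual proof to the existing literature.
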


The proof of Theorem \ref{thm:connections} follows immediately from the cohomology of the exact sequence
\begin{equation*} 0\to\II_X(k)\xrightarrow{\cdot H} \II_X(k+1)\to\II_S(k+1)\to 0.
\end{equation*}
For the proof of Theorem \ref{thm:lifting},
we refer to the papers cited in the
introduction.
\begin{rmk}
Note that the second assertion of Theorem \ref{thm:connections} cannot be reversed, because there exist
varieties $X$ such that $k+1$ is a lifting level (for a certain $k$), but nevertheless $X$ is not $k$-normal.
For instance, let $X$ be a smooth rational curve in $\p^3$ of degree $d\ge 5$ and assume that it is contained in a quadric. Then $X$ is not linearly normal but clearly $2$ is a lifting level for $X$.
\end{rmk}

We now start to prove Theorem  \ref{teo1} of the introduction. Next proposition takes care of the threefolds with sectional genus zero.

\begin{prop}\label{prop:rns}
Let $X'\subset\p^n$ be a rational normal scroll of dimension $3$
(therefore of degree $n-2$),
with $n\le 8$;
then a lCM linear projection $X$ of $X'$ to $\p^5$  is $2$-normal.
Moreover, $h^0(\II_X(3))>0$.
\end{prop}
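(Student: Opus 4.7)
The proof naturally splits into the case $n=5$ (where no projection is needed) and $n\in\{6,7,8\}$ (where $X$ is a genuine projection from a linear centre $L\subset\p^n$ of dimension $n-6$).

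When $n=5$, $X=X'$ is itself a rational normal scroll of degree $3$ in $\p^5$. It is arithmetically Cohen--Macaulay, so $h^1(\II_X(k))=0$ for all $k\ge 0$, giving $2$-normality at once. Its ideal is generated by the three $2\times 2$ minors of a $2\times 3$ matrix of linear forms, so $h^0(\II_X(2))\ge 3$; multiplying any such minor by a generic linear form then yields $h^0(\II_X(3))>0$.

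For $n\in\{6,7,8\}$, I would exploit the fact that $X'$ is the determinantal locus of a $2\times(n-2)$ matrix $M$ of linear forms on $\p^n$. For the cubic, I would eliminate the $n-5\le 3$ coordinates of $L$ from appropriate combinations of the $2\times 2$ minors of $M$ (possibly multiplied by linear forms), producing a nontrivial cubic in the six coordinates of $\p^5$ which vanishes on $X'$, hence on $X$. For the $2$-normality I would argue via Lemma~\ref{lem:knorm} case~\eqref{hyp} applied to $(Y,X)$, where $Y=X\cap H\subset\p^4$ is a general hyperplane section. To this end, I would first verify that $X$ is $1$-normal, i.e.\ $h^0(\OO_X(1))=6$: since $\pi|_{X'}$ is surjective, pullback embeds $H^0(\OO_X(1))$ into $H^0(\OO_{X'}(1))$, and the sections that actually descend are precisely those coming from hyperplanes of $\p^n$ through $L$, which form a six-dimensional family matching nondegeneracy in $\p^5$. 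Then I would show that $Y$ is $2$-normal by running the analogous argument for the (lCM) projection $Y=\pi(Y')$ of the rational normal surface scroll $Y'=X'\cap H'\subset \p^{n-1}$ of degree $n-2\le 6$.

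The main obstacle is this reduction to the surface case: $Y$ is itself a (non-normal) linear projection, and controlling its $2$-normality requires further descent to the curve section, where for $n=7,8$ (degree $5,6$) projected rational normal curves in $\p^3$ are themselves not $2$-normal in the usual sense. This forces one to use the alternative criterion of Lemma~\ref{lem:knorm} case~\eqref{mag} in place of (or in addition to) case~\eqref{hyp} at some stage, and to carry out a careful bookkeeping of the drop in global sections along the conductor of the non-normal morphism $\pi|_{X'}\colon X'\to X$, ensuring that $h^0(\OO_X(2))$ matches exactly the numerical requirement $21-h^0(\II_X(2))$ dictated by $2$-normality.
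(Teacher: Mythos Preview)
Your overall plan---reduce to the surface section $Y\subset\p^4$ via Lemma~\ref{lem:knorm}\eqref{hyp}, then invoke case~\eqref{mag} at the curve level---is exactly the paper's framework, but you stop short of the decisive step and your proposed workaround is not a proof. First, a minor point: your argument for $1$-normality of $X$ is not correct as written (when $\pi$ is not an isomorphism the image of $H^0(\OO_X(1))\hookrightarrow H^0(\OO_{X'}(1))$ need not coincide with the hyperplanes through $L$), but this is harmless since linear normality of any nondegenerate threefold in $\p^5$ is the corollary of Severi's theorem recalled in the introduction.

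The real gap is that ``careful bookkeeping along the conductor'' is not carried out and is not the paper's route. The missing idea is a specific choice of curve for Lemma~\ref{lem:knorm}\eqref{mag}: on the surface scroll $Y'=S(b_0,b_1)$ take a general $C'\in|C_0+(b_1+9-n)f|$, a smooth rational curve of degree $C'\cdot C_H=b_0+b_1+9-n=7$ for every $n\in\{6,7,8\}$. Since $h^0(\OO_C(2))=15=h^0(\OO_{\p^4}(2))$, the projected curve $C$ is $2$-normal iff it lies on no quadric. If the hyperplane section $C_H$ of $Y$ lies on a quadric, Theorem~\ref{thm:lifting} lifts it all the way to $X$ and one finishes via \cite{KW}. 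Otherwise one degenerates $C$ to $C_H$ together with $9-n$ rulings $f_i$; a short exact-sequence and semicontinuity argument then shows that any quadric through the degenerate curve would have to be irreducible and contain $C_H$, a contradiction. This is the construction you are missing. The cubic through $X$ then falls out for free: once $X$ and $Y$ are $2$-normal, the cubic through $C_H$ guaranteed by Riemann--Roch lifts first to $Y$ and then to $X$.
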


\begin{proof}
By Lemma~\ref{lem:knorm}, applied to a general hyperplane section,
it is sufficient to show the $2$-normality for a projection of a
$2$-dimensional rational normal scroll $Y'$
to a $\p^4$. Let us call this projected surface $Y\subset\p^4$. We have
$Y'=S(b_0,b_1)$, $0\le b_0\le b_1$, $b_0+b_1=n-2$. Then
the section $C_0$ (see \cite{H}, page 373) and the fiber $f$ generate the Picard
group of the ruled surface $Y'$. In particular, the hyperplane and canonical
divisors are
(numerically) $C_H=C_0+b_1f$ and $K=-2C_0+(-2-e)f$, where $-C_0^2=e=b_1-b_0$.
Moreover $K^2=8$ (see again \cite{H}, pages 373--374).

In order to apply again  Lemma~\ref{lem:knorm}, we will show that
there exists a curve $C'\subset Y' \subset \p^{n-1}$
such that $\deg C'>\deg  Y'$ and
$C=\pi(C')$ is $2$-normal and not contained in a
quadric.
Let us take $C'\in\abs{C_0+(b_1+9-n)f}$; it is immediate to see, for example
from the adjunction formula, that $C'$ has arithmetic genus zero. Moreover,
$\abs{C'}$ is very ample (see \cite{H}, Theorem~V.2.17(c)).
So, if $C'$ is general in its linear system, it is a smooth rational curve of
degree $C'\cdot (C_0+b_1f)=b_0+b_1+9-n=7$.
%Therefore $C'$ is a projection from a point of a rational normal curve in $\p^7$.

% Now, we want to show that $C=\pi(C')\subset\p^4$ is $2$-normal. Notice that,
% by \cite{GLP}, if $n\le 6$, then $C$ is always $k$-normal for $k\ge 2$; moreover,
% if $n=7$, $C$ is not $2$-normal if and only if it is smooth and has a
% $4$-secant line. So for $n\leq 7$ we are done unless $n=7$ and each curve
% $C$ has a $4$-secant line.
% Note that, since
% $(C_0+(b_1+1)f)\cdot f=1$ and $(C_0+(b_1+1)f)\cdot C_0=b_0+1<4$, we have that
% each $4$-secant line is not contained in $Y$. We cannot have the same fixed
% $4$-secant line for all $C$, since otherwise $\abs{C_0+(b_1+1)f}$ would have
% base points. By degree reasons, two such lines must not intersect.
% Therefore two of them generate a $3$-space $H$ and $Y\cap H=C_H$ is a
% quintic curve with at least two $4$-secant lines.
% Since such a curve has in general
% only one $4$-secant line, see \cite{LB}, then $C_H$ has infinitely many ones,
% and $C_H$ is contained in a quadric. Then, we can apply
% Theorem~\ref{thm:lifting}
%  to get that $Y$ and $X$ are contained in a quadric too. Therefore, $X$ is
% arithmetically Cohen-Macaulay (\cite{KW}, Theorem~2.1).

By Riemann-Roch $h^0(\OO_C(2))=15=h^0(\OO_{\p^4}(2))$,
so $C$ is not $2$-normal iff it is contained in a quadric.
Let us consider then the hyperplane section $C_H$ of $Y$; if $C_H$
is contained in a
quadric,
 we can apply
Theorem~\ref{thm:lifting}
 to get that $Y$ and $X$ are contained in a quadric too. Therefore, $X$ is
arithmetically Cohen-Macaulay (\cite{KW}, Theorem~2.1).
So, let us suppose $h^0(\II_{C_H}(2))=0$.

If $n=8$, $C$ is linearly equivalent to $C_H\cup f$.
Then, %$\II(C_H)\cap\II(f)\subset\II(C_H)$, therefore
\begin{equation*}
\frac{\II(C_H)}{\II(C_H)\cap\II(f)}\cong
\frac{\II(C_H)+\II(f)}{\II(f)}
\cong\frac{\II(C_H\cap f)}{\II(f)}
\end{equation*}
and so the following sequence
\begin{equation*}
0\to \II_{C_H\cup f}\to \II_{C_H}\to\II_{P\mid f}\to 0,
\end{equation*}
where $P=C_H\cap f$, is exact. We have that $\II_{P\mid f}\cong \OO_f(-1)$, and
since $C_H$ is degenerate and it is not contained in a quadric, we obtain
\begin{equation}\label{cu}
0\to H^0(\II_{C_H\cup f}(2))\to H^0(\OO_{\p^4}(1)) \to H^0(\OO_f(1))\to
H^1(\II_{C_H\cup f}(2))\to 0.
\end{equation}
Now, $h^0(\OO_{\p^4}(1))=5$, $h^0(\OO_f(1))=2$ and
$h^1(\II_{C_H\cup f}(2))\ge h^1(\II_{C}(2))$ by semicontinuity (Theorem~III.12.8
of \cite{H}). If, by contradiction, $h^1(\II_{C}(2))\neq 0$, then
$h^1(\II_{C_H\cup f}(2))\neq 0$, so  $h^0(\II_{C_H\cup f}(2))>3$, hence there exists
at least one irreducible quadric in $\p^4$
containing $C_H\cup f$,
in particular $C_H$, a contradiction. Therefore, $C$ is $2$-normal and not
contained in a quadric, and by Lemma~\ref{lem:knorm} $X$ is $2$-normal.

If $n=7$, $C$ is linearly equivalent to $C_H\cup f_1\cup f_2$, where $f_1$ and
$f_2$ are two fibers. Arguing as in the previous case, we obtain an
exact sequence
\begin{equation*}
0\to H^0(\II_{C_H\cup f_1\cup f_2}(2))\to H^0(\OO_{\p^4}(1)) \to
\begin{matrix}
H^0(\OO_{f_1}(1))\\
\oplus \\
H^0(\OO_{f_2}(1))
\end{matrix}
\to
H^1(\II_{C_H\cup f_1\cup f_2}(2))\to 0.
\end{equation*}
If $h^1(\II_{C}(2))\neq 0$, this time we get  $h^0(\II_{C_H\cup f_1\cup f_2}(2))>1$
and, since $\gen{f_1,f_2}\cong\p^3$, we get again an irreducible quadric
which contains $C_H$.

If $n=6$, $C$ is linearly equivalent to $C_H\cup f_1\cup f_2\cup f_3$;
in this case, it suffices to observe that, by semicontinuity,
$h^0(\II_{C_H\cup f_1\cup f_2\cup f_3}(2))\ge h^0(\II_C(2))$, so, if $C$ is contained
in a quadric, also $C_H\cup  f_1\cup f_2\cup f_3$ is contained in a quadric.
This quadric is irreducible since the three fibers generate $\p^4$. So we get
again a contradiction.

The last assertion follows from the fact that $h^0(\II_{C_H}(3))>0$ by Riemann
Roch, and by the fact that $X$ and $Y$ are quadratically normal.

\end{proof}

\subsection{Threefolds of degree $5$ in $\p^5$}

We shall now prove that a lCM threefold $X$ of degree $d=5$
 is always  contained  either in a quadric or in a cubic and is $2$-normal.

\begin{thm}\label{thm:51}
Let $X$ be a lCM threefold of degree $5$ and sectional genus $\pi$ in $\p^5$ not
contained in a quadric;
then $X$ is contained in a cubic and is $2$-normal. Moreover, if $\pi=1$ then
$h^0(\II_X(3))=5$.
%controllare \il numero di cubiche se pi=2
\end{thm}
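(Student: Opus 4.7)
The plan is to iterate the cohomology of
\[
0 \to \II_Y(k-1) \xrightarrow{\cdot H} \II_Y(k) \to \II_{Y \cap H}(k) \to 0
\]
across the chain $X \supset S \supset C \supset \Gamma$ of general hyperplane sections, reducing the desired vanishings to computations on a zero-dimensional section $\Gamma \subset \p^2$ of length $5$.

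First I would propagate the hypothesis ``not on a quadric'': Theorem~\ref{thm:lifting} applied with $\sigma=2$ in dimensions $3$ and $2$ would force $d \le 4$, contradicting $d = 5$, so $S$ and $C$ lie on no quadric either. By Severi's theorem $X$ is linearly normal; since $\deg S = 5 \neq 4$ excludes the projected Veronese, $S$ is also linearly normal, so $h^1(\II_X(1)) = h^1(\II_S(1)) = 0$. Bertini yields $S$ and $C$ integral, and Harris's uniform position principle then forces the five points of $\Gamma$ to be in sufficiently general position: no four are collinear, since otherwise monodromy would force all five onto a single line, implying (as $H''$ varies) that $C$ lies on a plane, contradicting its non-degeneracy in $\p^3$. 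Hence $\Gamma$ imposes independent conditions on both conics and cubics, giving $h^1(\II_\Gamma(2)) = h^1(\II_\Gamma(3)) = 0$.

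Chaining the cohomology sequences yields
\[
h^1(\II_X(2)) \le h^1(\II_X(1)) + h^1(\II_S(2)) \le h^1(\II_S(1)) + h^1(\II_C(2)) = h^1(\II_C(2)),
\]
and $h^1(\II_C(2))$ is computed via Euler characteristic: $\chi(\II_C(2)) = 10 - (11 - \pi) = \pi - 1$, combined with $h^0(\II_C(2)) = 0$ and the identification $h^2(\II_C(2)) = h^1(\OO_C(2))$, gives $h^1(\II_C(2)) = 1 - \pi + h^1(\OO_C(2))$. The main obstacle is showing this vanishes in the relevant regime: for smooth elliptic $C$ with $\pi = 1$ it is immediate from $\deg \OO_C(2) = 10 > 2\pi - 2$, but for general lCM $C$ one needs a refined computation of $h^1(\OO_C(2))$, for instance via the dualizing sheaf $\omega_C$ combined with the hyperplane-section sequence $0 \to \OO_C(1) \to \OO_C(2) \to \OO_\Gamma \to 0$ and the vanishing $h^1(\II_\Gamma(2)) = 0$.

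For the containment in a cubic, $\chi(\II_C(3)) = 20 - (16 - \pi) = 4 + \pi \ge 4$, so $h^0(\II_C(3)) \ge 4 > 0$; the $2$-normality established above, together with $h^0(\II_X(2)) = h^0(\II_S(2)) = 0$, lifts cubics surjectively through the exact sequences, giving $h^0(\II_X(3)) = h^0(\II_S(3)) = h^0(\II_C(3)) > 0$. Finally, for $\pi = 1$, $h^1(\OO_C(3)) = 0$ since $\deg \OO_C(3) = 15 > 2\pi - 2 = 0$, hence $h^0(\OO_C(3)) = 15$ and $h^0(\II_C(3)) = 5$; moreover $h^1(\II_C(3)) \le h^1(\II_C(2)) + h^1(\II_\Gamma(3)) = 0$, so the lifting chain preserves this equality and produces $h^0(\II_X(3)) = 5$.
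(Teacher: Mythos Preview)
Your chain of inequalities $h^1(\II_X(2)) \le h^1(\II_C(2))$ is correct, and for $\pi=1$ it yields the desired $2$-normality (the case $\pi=2$ is vacuous, since then $C$ is a curve of maximal genus, hence aCM and lying on a quadric, which via Theorem~\ref{thm:lifting} forces $X$ onto a quadric). However, the case $\pi=0$ is a genuine gap. For a smooth rational quintic $C\subset\p^3$ not lying on a quadric, your own Euler-characteristic computation gives
\[
h^1(\II_C(2)) = 1 - \pi + h^1(\OO_C(2)) = 1 - 0 + 0 = 1,
\]
not $0$; equivalently, $h^0(\OO_C(2))=11>10=h^0(\OO_{\p^3}(2))$, so the restriction map cannot be surjective. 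Thus your inequality yields only $h^1(\II_X(2))\le 1$, which is insufficient. Such threefolds do exist (isomorphic projections to $\p^5$ of a degree-$5$ rational normal scroll in $\p^7$), so the case cannot be dismissed.

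The paper treats $\pi=0$ by an entirely different route (Proposition~\ref{prop:rns}): it identifies $X$ as a projected rational normal scroll and applies Lemma~\ref{lem:knorm}, deriving $2$-normality of the surface section $S$ not from its hyperplane section $C$ but from an auxiliary rational curve $C'\subset S$ of degree $7>\deg S$ that is $2$-normal and lies on no quadric; this sidesteps the failure of $2$-normality of $C$. For $\pi=1$ the paper's argument is close to yours, though it obtains $h^0(\II_C(3))=5$ via the explicit biliaison $C\sim_{(3,3)}C_4\sim_{(2,3)}\ell_1\cup\ell_2$ and mapping cone rather than your Euler-characteristic-plus-vanishing approach. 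Incidentally, your flagged worry about $h^1(\OO_C(2))$ for non-smooth $C$ is not a real obstacle: since $C$ is integral and lCM, Serre duality with the dualizing sheaf $\omega_C$ (of degree $2\pi-2$) gives $h^1(\OO_C(2))=h^0(\omega_C(-2))=0$ as soon as $10>2\pi-2$.
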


\begin{proof}
If the geometric genus of the curve section $C$ is zero, then we apply
Proposition~\ref{prop:rns}.

If $\pi=2$, then $C$ is a curve of maximal genus, and therefore it is
arithmetically Cohen-Macaulay (aCM for short), and so also the surface
section $S$ and $X$ are aCM.

If $\pi=1$, let us consider the long exact sequence of cohomology %\eqref{eq:les}
\begin{equation}
0\to H^0(\II_X(3))\to H^0(\II_S(3))
\to H^1(\II_X(2))\to \dotsb.
\end{equation}
The intersection of $X$  with a general
plane is formed by $5$ points, which are contained in a conic. If this conic
  lifts to a quadric containing the curve section of $X$, by
Theorem~\ref{thm:lifting} for surfaces and threefolds
it lifts also to a quadric containing $X$. So it
does not lift, then the
curve section $C=H\cap H'\cap X$ has genus one, and it is bilinked to
a couple of  skew lines $\ell_1$ and $\ell_2$:
$C\underset{(3,3)}\sim C_4\underset{(2,3)}\sim\ell_1\cup\ell_2$,
where $C_4$ is a rational quartic.

Now, from
\begin{equation*}
0\to\II_S(1)\to \II_S(2)\to\II_C(2)\to 0,
\end{equation*}
since $ H^0(\II_C(2))=0$, we get
\begin{equation}
0\to H^1(\II_S(1))\to  H^1(\II_S(2))\to H^1(\II_C(2))\to \dotsb.
\end{equation}
By Severi's Theorem $H^1(\II_S(1))= 0$.
But also $H^1(\II_C(2))=0$:
indeed by
\begin{equation*}
0\to\II_C(2)\to \OO_{\p^3}(2)\to\OO_C(2)\to 0,
\end{equation*}
we get
\begin{equation}
0\to H^0(\II_C(2))\to  H^0(\OO_{\p^3}(2))\to H^0(\OO_C(2))\to
H^1(\II_C(2))\to 0
\end{equation}
and, using Riemann-Roch, we conclude that $ h^0(\II_C(2))=h^1(\II_C(2))=0$.

Then $H^1(\II_S(2))=0$ so $ h^0(\II_S(3))=h^0(\II_C(3))$.
Now, $h^0(\II_C(3))=5$ (by mapping cone), so also $ h^0(\II_S(3))=5$.

 From the exact sequence
\begin{equation}
0\to H^0(\II_X(1))\to  H^0(\II_X(2))\to H^0(\II_S(2))\to H^1(\II_X(1))
\to H^1(\II_X(2))\to 0
\end{equation}
and  $H^0(\II_X(1))=0$, we obtain $H^1(\II_X(2))\cong H^1(\II_X(1))=0$;
and analogously, by
\begin{equation}
0\to H^0(\II_X(2))\to  H^0(\II_X(3))\to H^0(\II_S(3))\to H^1(\II_X(2))
\to 0
\end{equation}
we conclude that
$h^0(\II_X(3))=5$.
\end{proof}

\subsection{Threefolds of degree $6$ in $\p^5$}

By Proposition~\ref{prop:rns} it  suffices to analyze the cases of
positive sectional genus $\pi$. By Castelnuovo bound, $\pi\le 4$.
If $\pi=4$, then the curve section is aCM, so also
the threefold $X$ is aCM.

If $\pi=3$, then with computations similar to those made in the proof of
Theorem~\ref{thm:51}
we conclude that also in this case $X$ is quadratically normal and, if it is
not contained in a quadric, it is contained in a cubic.

We will show in the next section that there do exist threefolds of degree $6$
and sectional genus one which are not quadratically normal
(necessarily singular); so we analyze now the case of sectional
genus two:
\begin{prop}
Let $X$ be a threefold of degree $6$ and sectional genus two. Then
$h^0(\II_X(3))>0$ and $h^1(\II_X(2))\le 1$.
\end{prop}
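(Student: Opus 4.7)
My approach is to reduce every cohomological question to the curve section $C = X \cap H \cap H' \subset \p^3$, which by Bertini applied twice to the integral threefold $X$ is an integral lCM curve of degree $6$ and arithmetic genus $2$. First I would chase the standard exact sequences $0 \to \II_Y(k) \to \II_Y(k+1) \to \II_{Y \cap H}(k+1) \to 0$ (for $Y = X$ and $Y = S$) to obtain
\[
h^0(\II_X(2)) = h^0(\II_S(2)) = h^0(\II_C(2)) =: q;
\]
this uses $h^1(\II_X(1)) = 0$ (Severi) and $h^1(\II_S(1)) = 0$ (Severi applied to $S$, which is not a projected Veronese since $\deg S = 6$).

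The crux of the proof will be to show $q = 0$, i.e.\ no integral curve of degree $6$ and arithmetic genus $2$ lies on any quadric of $\p^3$. I plan to verify this by cases on the rank of the quadric. On a smooth quadric $\p^1 \times \p^1$, bidegrees $(a, b)$ with $a + b = 6$ give $p_a = (a-1)(b-1) \in \{0, 3, 4\}$, never $2$. On a quadric cone (rank $3$), I would pull back to the minimal resolution $\mathbb{F}_2 \to Q$ and analyze the possible classes $\alpha E_0 + 6 F$ of the strict transform with $\alpha \in \{1, 2, 3\}$: combining the formula for $p_a(\tilde C)$ with the singularity contribution $\binom{m}{2}$ at the vertex (where $m = 6 - 2\alpha$) forces $p_a(C) \geq 3$ in every case. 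If $C$ lay on a rank-$\leq 2$ quadric, irreducibility would force $C$ into a plane, giving $p_a(C) = \binom{5}{2} = 10$. This quadric-cone analysis is the main technical obstacle.

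Once $q = 0$ is established, Riemann--Roch on $C$ (line bundles of positive degree are non-special since $\deg K_C = 2$) gives $h^1(\II_C(2)) = 1$ and $h^0(\II_C(3)) \geq 3$. The bound $h^1(\II_X(2)) \leq 1$ is then immediate from the injections $H^1(\II_X(2)) \hookrightarrow H^1(\II_S(2)) \hookrightarrow H^1(\II_C(2))$ produced by the same long exact sequences. For the cubic containment, iterating in degree $3$ yields
\[
h^0(\II_S(3)) \geq h^0(\II_S(2)) + h^0(\II_C(3)) - h^1(\II_S(2)) \geq 2,
\]
\[
h^0(\II_X(3)) \geq h^0(\II_X(2)) + h^0(\II_S(3)) - h^1(\II_X(2)) \geq 1,
\]
so $h^0(\II_X(3)) > 0$.
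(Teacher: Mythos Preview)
Your approach is correct in outline and reaches the conclusions, but it takes a genuinely different route from the paper. The paper cuts one dimension further, to the $0$-dimensional section $\Gamma\subset\p^2$, and splits into two cases via the lifting theorem (Theorem~\ref{thm:lifting}): if $h^0(\II_\Gamma(2))>0$ then successive liftings force $X$ onto a quadric (hence aCM and done); if $h^0(\II_\Gamma(2))=0$ one reads off $h^0(\II_C(2))=0$, $h^1(\II_C(2))=1$ and $h^0(\II_C(3))=3$ (the last using \cite{GLP}) and then chases the same exact sequences you do. Your route replaces this appeal to lifting by a direct proof that no integral sextic of arithmetic genus $2$ in $\p^3$ lies on a quadric; this is more hands-on but self-contained, and as a side effect it shows that the paper's first case is in fact vacuous.

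There is, however, a soft spot in your quadric-cone case. You invoke a ``singularity contribution $\binom{m}{2}$'' at the vertex with $m=\tilde C\cdot C_0=6-2\alpha$, but the bound $\delta_v\ge\binom{m}{2}$ is a \emph{plane}-curve inequality and is not valid for a curve sitting on a surface which is itself singular at $v$. Concretely, for $\alpha=1$ one has $p_a(\tilde C)=0$ and $m=4$, so your formula would predict $p_a(C)\ge 6$, whereas the true value is $p_a(C)=4$ (so $\delta_v=4<\binom{4}{2}$). The repair is quick and in fact cleaner than going through $\mathbb F_2$: since $\deg C=6$ is even, $C$ is a \emph{Cartier} divisor on the cone $Q$, linearly equivalent to $3H$; adjunction on the Gorenstein surface $Q$ (with $K_Q=-2H$) gives
\[
2p_a(C)-2 \;=\; C\cdot(C+K_Q)\;=\;3H\cdot H\;=\;6,
\]
hence $p_a(C)=4$ in every case, and your conclusion $p_a(C)\neq 2$ stands. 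With this correction the remainder of your argument (Riemann--Roch on $C$, the injections of $H^1$'s coming from Severi, and the two degree-$3$ inequalities) goes through exactly as written.
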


\begin{proof}
Let $S,C$ and $\Gamma$ be, respectively, the general linear sections of
dimensions $2,1$ and $0$.

If $h^0(\II_\Gamma(2))>0$, by Theorem~\ref{thm:lifting},
we have as usual that also $X$ is
contained in a quadric and is quadratically normal, and we are done.

Let us suppose now  $h^0(\II_\Gamma(2))=0$. Then $\II_\Gamma$ has the
following minimal free resolution:
\begin{equation*}
0\to 3\OO_{\p^2}(-4)\to 4\OO_{\p^2}(-3)\to\II_\Gamma\to 0,
\end{equation*}
with $h^0(\II_\Gamma(3))=4$. From $h^0(\II_C(2))=0$ and Riemann-Roch,
we have $h^1(\II_C(1))=h^1(\II_C(2))=1$. Moreover, from \cite{GLP},
$h^1(\II_C(k))=0$ $\forall k\ge 3$. In particular, $h^0(\II_C(3))=3$.

Note that  $h^1(\II_S(2))\le 1$, because we have
\begin{equation*}
0\to H^1(\II_S(1))\to H^1(\II_S(2))\to  H^1(\II_C(2))\to \dotsb
\end{equation*}
with $S$ linearly normal and $h^1(\II_C(2))=1$.

From the exact sequence
\begin{equation}\label{sq:3}
0\to H^0(\II_S(3))\to  H^0(\II_C(3))\to H^1(\II_S(2))\to H^1(\II_S(3))
\to 0
\end{equation}
we can deduce now that  $h^0(\II_S(3))\ge 2$.
Finally, let us consider the sequence
\begin{equation*}
0\to H^0(\II_X(3))\to  H^0(\II_S(3))\to H^1(\II_X(2))
\to \dotsb.
\end{equation*}
Since $h^1(\II_X(2))\le h^1(\II_S(2))\le 1$, we get
  $h^0(\II_X(3))>0$.
\end{proof}

We have concluded in this way the proof of Theorem~\ref{teo1}
of the introduction.

\begin{cor}\label{cor:one}
The congruence of the $4$-secant lines of a lCM threefold of
degree $5$ or of degree $6$ with $\pi\neq 1$ cannot have positive order.
\end{cor}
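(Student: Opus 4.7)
The plan is to derive the corollary from Theorem~\ref{teo1} by a direct B\'ezout argument. First I would note that the hypotheses on $X$ in the corollary cover exactly the two cases of Theorem~\ref{teo1}: in both situations the conclusion is that there exists a hypersurface $W\subset\p^5$ with $X\subset W$ and $\deg W\le 3$ (either a quadric or a cubic).

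Next I would show that any $4$-secant line $\ell$ of $X$ must lie on such a $W$. Since $\ell$ meets $X$, and hence $W$, in a zero-dimensional subscheme of length at least $4$, while $\deg W\le 3$, the intersection $\ell\cap W$ cannot be finite; B\'ezout forces $\ell\subset W$. Consequently the union of all $4$-secant lines of $X$ is contained in the proper subvariety $W$ of $\p^5$.

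The conclusion is then immediate: a general point of $\p^5$ does not lie on $W$, hence does not lie on any $4$-secant line of $X$, so the order of the congruence of $4$-secant lines is zero. There is no substantive obstacle in this argument; the only thing to verify is that the dichotomy provided by Theorem~\ref{teo1} is available in every case allowed by the statement of the corollary, which follows at once from the bound $\pi\le 4$ in degree $6$ and the fact that the cases $\pi=0$ (treated by Proposition~\ref{prop:rns}), $\pi=2,3,4$ in degree $6$, and all sectional genera in degree $5$ are precisely those for which Theorem~\ref{teo1} guarantees containment in a quadric or a cubic.
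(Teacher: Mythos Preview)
Your argument is correct and is exactly the intended one: the paper gives no explicit proof of this corollary because it follows immediately from Theorem~\ref{teo1} via the B\'ezout argument you describe. The only point one might add is that the paper already makes this reasoning explicit in the Introduction (for the Palatini case), so the corollary is simply the contrapositive applied to the cases where Theorem~\ref{teo1} guarantees a quadric or cubic through $X$.
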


\begin{rmk}
We observe that we can slightly weaken the hypothesis of Theorem~\ref{teo1}
in the case $d=6$. In fact in Proposition~\ref{prop:rns} we have proved that
if the geometric sectional genus is zero, then $X$ is quadratically normal and
contained in a cubic.
\end{rmk}

\begin{rmk}
In \cite{Ko}, it has been shown that if a lCM threefold $X\subset\p^5$ is
contained in a cubic hypersurface which is also lCM, then $X$ is aCM.
Moreover, if $X$ is smooth, the assumption lCM on the cubic
hypersurface can be removed.
\end{rmk}

To state the following proposition, we need to recall two definitions about
first order congruences of lines $B$ in $\p^n$
from \cite{DP4}, and precisely  those of \emph{parasitical scheme} and of
\emph{pure focal locus}.
A parasitical scheme is a subscheme of the focal locus, of dimension at least
two, covered by lines of $B$
and that is not met by a general line of the congruence.
The \emph{pure focal locus} is
the union of the components of the focal locus which are not parasitical.
Note that
therefore the general lines of $B$ are lines meeting $n-1$ times the
pure fundamental locus (see \cite{DP4}, Proposition~2.1).

\begin{prop}
Let $B$ be a first order congruence of lines in $\p^5$ such that its pure focal
locus $X$ is an irreducible threefold of degree six.
Then, the sectional genus of $X$ is one and
the multidegree of $B$ is $(1,3,a)$, with $a\le 3$.
\end{prop}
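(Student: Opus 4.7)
The plan is to split the proposition into two independent claims: first, that $\pi=1$, and second, that the multidegree has the form $(1,3,a)$ with $a\le 3$.

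For the first claim, I would observe that $X$ being the pure focal locus of a first-order congruence $B$ means, by \cite{DP4}, Proposition~2.1 recalled just before the statement, that a general line of $B$ meets $X$ in $n-1=4$ points. In particular the congruence of $4$-secant lines of $X$ has positive order. Applying Corollary~\ref{cor:one} to the lCM integral threefold $X$ of degree $6$ then forces $\pi=1$, since any other value of the sectional genus would make the $4$-secant congruence of $X$ have zero order.

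For the second claim, I would fix a general hyperplane $H\subset\p^5$ and set $S=X\cap H$. Since $\dim B=4$ and $\mathrm{Gr}(1,H)\subset\mathrm{Gr}(1,\p^5)$ has codimension $2$, the lines of $B$ lying in $H$ form a $2$-dimensional family sweeping a hypersurface $Y_H\subset\p^4$ of degree $a_1$, and each such line is $4$-secant to $S$ (its four focal points on $X$ are forced into $H\cap X=S$). The crux is then to show that $S$ lies on a cubic hypersurface $F\subset\p^4$: its curve section $C\subset\p^3$ has degree $6$ and genus $1$, so $h^0(\II_C(3))\ge 20-18=2$ by Riemann--Roch, and a cohomological chase on $0\to\II_S(2)\to\II_S(3)\to\II_C(3)\to 0$ combined with a bound on $h^1(\II_S(2))$ (in the spirit of those carried out in Section~\ref{sec:1}) propagates this to $h^0(\II_S(3))>0$. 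With $F$ in hand, every line of $B$ in $H$ meets $F$ in at least $4$ points and hence must lie in $F$; so $Y_H\subseteq F$, and since $Y_H$ is a threefold and $F$ an irreducible cubic, $Y_H=F$ has degree $3$, proving $a_1=3$.

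For the bound $a_2\le 3$, I would invoke the classical formula from \cite{DP6} for the degree of the focal scheme of a first-order congruence in $\p^5$: the total focal scheme has degree $3a_1-a_2$. Since this scheme contains $X$ of degree $6$ together with any parasitical components (of non-negative degree), one obtains $6\le 3a_1-a_2=9-a_2$, hence $a_2\le 3$. The main obstacle I anticipate is the cubic-lifting step for $S$: the hypothesis does not require $S$ to be $2$-normal (the proposition permits $h^1(\II_X(2))\ne 0$), so the cohomological analysis has to be done carefully using only $\deg S=6$ and $\pi=1$. Once $h^0(\II_S(3))>0$ is secured, the remaining sweeping and degree-formula arguments are geometric and routine.
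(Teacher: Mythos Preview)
Your argument for $\pi=1$ matches the paper's exactly. For $a_1=3$ you follow the same idea as the paper --- show that $S$ lies on a cubic and trap $Y_H$ inside it --- but your cohomological chase is not quite sufficient as written: Riemann--Roch alone gives $h^0(\II_C(3))\ge 2$, while the bound you can extract on $h^1(\II_S(2))$ from Severi and the hyperplane sequence is only $h^1(\II_S(2))\le h^1(\II_C(2))=2$ (since $C$ is not on a quadric), which yields $h^0(\II_S(3))\ge 0$. The paper closes this by invoking \cite{GLP} to get $h^0(\II_C(3))=2$ exactly and then arguing via sequence~\eqref{sq:3} that $h^0(\II_S(3))=1$, because $h^0(\II_S(3))=2$ would force $S$ (hence $X$) to be quadratically normal, and then the cubic through $S$ would lift to $X$, contradicting order one. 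Your claim that $F$ is irreducible is fine once you note $S$ is not on a quadric (else $X$ would be, and $B$ would have order zero), but you should say this.

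The real gap is in your bound $a_2\le 3$. You invoke a formula ``the total focal scheme has degree $3a_1-a_2$'' and attribute it to \cite{DP6}, but that paper concerns congruences with one-dimensional focal locus and contains no such statement for $\p^5$. Moreover the formula does not obviously hold scheme-theoretically in the degenerate cases treated later in the paper (e.g.\ the $(1,3,1)$ example, where the stated focal locus has degree $6$, not $8$), so using it as a black box is unsafe. The paper takes a completely different and more robust route: the third multidegree $a_2$ counts lines of $B$ in a general $\p^3$, and these are $4$-secant to the curve section $C$, a smooth sextic of genus one in $\p^3$; Le~Barz's quadrisecant formula \cite{LB} gives that such a curve has exactly $3$ quadrisecants, and since some of these may be parasitical, $a_2\le 3$. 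You should replace your focal-degree argument with this enumerative one.
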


\begin{proof}
$X$ must be of sectional genus one, by Corollary~\ref{cor:one}.
If $C\subset\p^3$ is a smooth connected sextic curve of genus one, then the
expected number of $4$-secant lines is $3$, see \cite{LB}.

Moreover we have $h^0(\II_C(3))=2$, since $C$ is cubically normal by
\cite{GLP}.
From sequence~\eqref{sq:3} we deduce $h^0(\II_S(3))=1$, since otherwise
$S$, and therefore $X$, would be quadratically normal, therefore, the
second multidegree is $3$.

The third multidegree  $a$ is necessarily $\le 3$,
since we have observed that the expected
value is $3$, but some of these $3$ lines could be parasitical.
\end{proof}

We will see in Section~\ref{sec:2} an example with a parasitical
line. We will also show the existence of congruences of multidegree $(1,3,a)$ for
any $0\le a\le 3$.

\section{Linear and quadratic congruences}\label{sec:2}

We recall that a \emph{congruence of lines} in $\p^5$ is a flat family of lines
of dimension $4$. A \emph{linear congruence} is a congruence obtained by
a linear section---with a linear space $\Delta$ of dimension $10$---of
the Grassmannian
$\Gr(1,5)\subset\p^{14}$. Linear congruences in $\p^5$ are studied and
classified in \cite{dm}.

We recall that such a classification is obtained considering
$\check\Gr(1,5)\subset\check\p^{14}$ which is a cubic hypersurface, the
\emph{Pfaffian}, intersected with $\check\Delta\cong\p^3$.
Then, we have classified the linear congruences considering the
surface $Y:=\check\Gr(1,5)\cap\check\Delta$ and its possible positions in
  $\check\Gr(1,5)$ and with $\Sing(\check\Gr(1,5))\cong\Gr(3,5)$.

Among all the cases considered, particularly interesting is the
case 5.2.1 of \cite{dm}:  in this situation $Y$ is reducible, with
$Y=\pi\cup Q$, where $\pi$ is a plane, $Q$ is a quadric, and
$Y\cap \Gr(3,5)=\emptyset$. Let
\begin{equation*}
\gamma\colon \check\Gr(1,5)\dashrightarrow \Gr(1,5)
\end{equation*}
be the Gauss map. Its
restriction to the plane $\pi$ is a degree $2$ Veronese morphism, hence its
image is a Veronese surface. It parametrizes
the secant lines of a skew cubic curve
$C$ embedded in a three dimensional linear space $L\subset\p^5$.

Let us recall that the linear spaces contained in
$\check\Gr(1,5)\setminus \Gr(3,5)$
can be interpreted as linear spaces of
$6\times 6$ skew-symmetric matrices of constant rank $4$.
They are classified in \cite{MM}.
 In particular, the
planes are distributed in $4$ $\PGL_6$-orbits,
all of dimension $26$, and the one we are
considering is  generated for instance by the plane
\begin{align}\label{eq:mat}
\pi_t&=\begin{pmatrix} 0 & a & b & c & 0 & 0\\
      & 0 & 0 & 0 & 0 & 0\\
&  & 0 & 0  & 0 & a\\
&   &  & 0 & 0 & b\\
&  &  &  & 0 & c\\
&  &  &  &  & 0
\end{pmatrix}\\
&=ah_1+bh_2+ch_3.
\end{align}
Dually, $\check\pi_t=H_1\cap H_2 \cap H_3$ is a $11$-dimensional linear space,
which is tangent to the Grassmannian along the Veronese surface
$\gamma(\pi_t)$.

In terms of coordinates, in the Pl\"ucker embedding,
by \eqref{eq:mat}, the ideal of $\check\pi_t$ is
\begin{equation}\label{eq:pit}
  I(\check\pi_t) = \gen{p_{01}+p_{25},p_{02}+p_{35},p_{03}+p_{45}}.
\end{equation}
Let us denote by
$\Gamma$ the intersection
\begin{equation}\label{eq:gamma}
\Gamma:=\check\pi_t\cap \Gr(1,5).
\end{equation}
It is a $5$-dimensional family of lines.

We note that in this
case the Veronese surface parametrizes the secant lines of the twisted cubic
$C$
contained in
the $3$-dimensional subspace $L$ of equations $x_0=x_5=0$ defined by
\begin{equation}\label{cubic}
\rk\begin{pmatrix}
x_1 & x_2 & x_3\\
x_2 & x_3 & x_4
\end{pmatrix}<2.
\end{equation}

The lines of $\Gamma$ can be described as
follows:

\begin{lem}\label{lem:gp}
Let us denote by $\Gamma_p$ the lines of $\Gamma$ through $P$; then
\begin{enumerate}
\item if $P\in \p^5\setminus L$, $\Gamma_P$ forms a pencil of lines in
a plane that intersects $C$ in only one point;
\item if $P\in L\setminus C$,  $\Gamma_P$ is the star of lines through $P$
  contained in $L$;
\item if $P\in C$, $\dim(\Gamma_P)=3$ and  $\Gamma_P$ generates a hyperplane of
$\p^5$ (which contains $L$).
\end{enumerate}
\end{lem}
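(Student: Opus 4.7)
The plan is to substitute the parametrization of lines through $P$ into the defining equations \eqref{eq:pit} of $\check\pi_t$ and analyze the resulting linear system case by case. A line through $P$ is spanned by $P$ and an auxiliary point $Q=[Q_0:\dotsb:Q_5]$ (defined modulo $P$), and its Pl\"ucker coordinates are $p_{ij}=P_iQ_j-P_jQ_i$. Substituting into \eqref{eq:pit} gives the $3\times 6$ homogeneous linear system
\begin{align*}
P_0Q_1-P_1Q_0+P_2Q_5-P_5Q_2 &= 0,\\
P_0Q_2-P_2Q_0+P_3Q_5-P_5Q_3 &= 0,\\
P_0Q_3-P_3Q_0+P_4Q_5-P_5Q_4 &= 0,
\end{align*}
whose rank, depending on the position of $P$, determines $\dim \Gamma_P$.

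For case~(1), $P\notin L$ means $P_0\neq 0$ or $P_5\neq 0$; assuming without loss of generality $P_0=1$ (the symmetric case $P_5\neq 0$ being analogous), the system solves upper-triangularly for $Q_1,Q_2,Q_3$ in terms of the free parameters $Q_0,Q_4,Q_5$, hence has rank $3$. Quotienting by $P$ leaves a pencil of lines filling a plane $\Pi(P)$ through $P$. To show $\Pi(P)\cap C$ consists of exactly one point, I fix an explicit basis of the $3$-dimensional solution space and intersect $\Pi(P)$ with $L=\{x_0=x_5=0\}$: the two equations $x_0=0$ and $x_5=0$ pin down two of the three parameters, leaving a single point $P'\in L$. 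A direct check, using the minors in \eqref{cubic}, that the coordinates of $P'$ satisfy the rank-$1$ condition defining $C$ then finishes case~(1). The main technical obstacle is precisely this last verification, namely that $\Pi(P)\cap L$ always lands on $C$ rather than merely in $L$.

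For case~(2), having $P\in L\setminus C$ (so $P_0=P_5=0$) kills every term involving $Q_1,Q_2,Q_3,Q_4$ in all three equations, leaving a $3\times 2$ system in $(Q_0,Q_5)$ whose three maximal minors coincide, up to sign, with the three $2\times 2$ minors in \eqref{cubic}. Since $P\notin C$, this matrix has rank $2$, forcing $Q_0=Q_5=0$, i.e., $Q\in L$. Hence $\Gamma_P$ is the star of all lines through $P$ contained in $L$, a $\p^2$.

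For case~(3), parametrize $C$ by $P(t)=[0:1:t:t^2:t^3:0]$ (with $t=\infty$ corresponding to $e_4=[0:0:0:0:1:0]$). A direct substitution shows the three equations become proportional (or vanish identically at $t=0,\infty$), collapsing to the single relation $Q_0-tQ_5=0$ (respectively $Q_5=0$ at $t=\infty$). The solution space is a $\p^4$ containing $P$, so modding by $P$ exhibits $\Gamma_P$ as a $\p^3$ of lines. All these lines lie in the hyperplane $H_t:=\{x_0=tx_5\}$ (respectively $\{x_5=0\}$), which manifestly contains $L$; being $3$-dimensional, $\Gamma_P$ spans $H_t$, completing the proof.
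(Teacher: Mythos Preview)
Your proof is correct and follows the same approach as the paper's: both substitute the Pl\"ucker coordinates of a line through $P$ into the three linear equations \eqref{eq:pit} and analyze the rank of the resulting $3\times 6$ coefficient matrix (the paper's matrix $A$), obtaining rank $3$, $2$, or $1$ according as $P\in\p^5\setminus L$, $P\in L\setminus C$, or $P\in C$. Your argument is in fact more complete than the paper's, which records only the rank computation and then writes ``the thesis follows'': you explicitly verify in case~(1) that $\Pi(P)\cap L$ is a single point lying on $C$, and in case~(3) that the hyperplane spanned by $\Gamma_P$ is $\{x_0=t x_5\}\supset L$, both of which the paper leaves to the reader.
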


\begin{proof}
If $P=(a_0:\dotsb :a_5)\in  \p^5$, then, by \eqref{eq:pit}, and
recalling that $p_{ij}=x_ia_j-x_ja_i$, the
lines through $P$ belonging to $\Gamma$ are contained in the linear space
defined by
\begin{align}
a_1x_0-a_0x_1+a_5x_2-a_2x_5&=0\\
a_2x_0-a_0x_2+a_5x_3-a_3x_5&=0\\
a_3x_0-a_0x_3+a_5x_4-a_4x_5&=0.
\end{align}
Now, the matrix
of the coefficients
\begin{equation}
A=\begin{pmatrix}
a_1 & -a_0 & a_5 & 0 &0 & -a_2\\
a_2 & 0 &-a_0 & a_5 & 0 &-a_3\\
a_3 & 0 & 0 & -a_0 & a_5 & -a_4
\end{pmatrix}
\end{equation}
has maximal rank if and only if $P\in \p^5\setminus L$.
In fact, it is an easy exercise to show that
\begin{equation}
\rk(A)<3 \iff P\in L
\end{equation}
and, if $a_0=a_5=0$
\begin{equation}
\rk(A)<2 \iff P\in C,
\end{equation}
and the thesis follows.

\end{proof}

Now, in order to obtain a congruence $B$, we have to intersect $\Gamma$ with a
hypersurface. Since we are interested in first order congruences, we may
need  that
the congruence $B$ splits in some components.

We have performed the computations in the following examples with the help of
Macaulay2 \cite{M2}.

We note first that $\Gr(1,L)$ is the intersection of $\Gr(1,5)$ with the following
$5$-space:
\begin{equation}\label{eq:klein}
\Pi_L=V(p_{01},p_{02},p_{03},p_{04},p_{05},p_{15},p_{25},p_{35},p_{45})
\end{equation}
and the hyperplane  $H_L:=V(p_{05})$ is the only one which is
tangent to  $\Gr(1,5)$ along $\Gr(1,L)$.

\begin{ex}\label{ex:1}
We first intersect $\Gamma$ with a general hyperplane $H$, in
particular we request that $H \not\supset \Gr(1,L)$. This is case
5.2.1 of \cite{dm}. In this case the congruence $B$ is
irreducible, and the focal locus is $F=X\cup L'$,  where $L'$ is a
scheme whose support is $L$, having $C$ as an embedded component,
and $X$ is a sextic threefold, with Hilbert polynomial
$P_X(t)=t^3+3t^2+2$. $X\cap L$ is a quartic surface with $C$ as
singular locus, and $L'$ is parasitical: therefore the lines of $B$
are the $4$-secants to $X$. The multidegree of $B$ is $(1,3,2)$,
so the hyperplane section of $X$, $S$, is contained in a cubic,
while $X$ is not. The singular locus of $X$ is just the twisted
cubic $C$. The sectional genus is one: hence applying the
quadruple point formula for a curve in $\p^3$ (see \cite{LB}), we
see that the curve section $C_H$ of $X$ has three quadrisecant
lines: two are the lines of the congruence, and the third is
$H\cap L$, \ie a parasitical line.

$X$ is lCM: in fact, its ideal sheaf $\II_X$ has a minimal free
resolution of the form
\begin{equation}\label{seq:lCM}
0\to\OO(-8)\to\OO^6(-7)\xrightarrow{A}\OO^{16}(-6)\to\OO^{22}(-5)
\to\OO^{12}(-4)\to\II_X
\end{equation}
and the points where $X$ is not Cohen-Macaulay
are the ones defined by the first
nonzero Fitting ideal of $A$ (see for example \cite{RS}). It results that the map $A$
has a nonzero kernel, and the $5\times 5$-minors of $A$
define an irrelevant ideal, therefore $X$ is lCM.

As an explicit example, let us take
$H=V(p_{05}+p_{12}+p_{14}+p_{34})$  so the quadric $Q$, component
of the cubic surface $Y$, has equation $a^2+b^2+c^2+d^2-ac$, \ie
the Pfaffian divided by $d$ of the matrix
\begin{equation}
A(d)=\begin{pmatrix} 0 & a & b & c & 0 & d\\
      & 0 & d & 0 & d & 0\\
&  & 0 & 0  & 0 & a\\
&   &  & 0 & d & b\\
&  &  &  & 0 & c\\
&  &  &  &  & 0
\end{pmatrix}.
\end{equation}

Finally, we observe that the lines of $B$ passing through a
general point $P\in X$ form a planar pencil since $B$ is a linear
congruence (see Proposition~4.2 of \cite{dm}).
\end{ex}

\begin{ex}
If  we intersect $\Gamma$ with a hyperplane $H$ such that
$H \supset \Gr(1,L)$ but $H$ is not tangent to $\Gr(1,5)$ along
$\Gr(1,L)$ (\ie $H\neq H_L$), then $B=B'\cup \Gr(1,L)$; the
multidegree of $B'$ is then $(1,3,1)$. The skew-symmetric
matrix associated to $\check\Delta$ (where $\Delta=\Gamma\cap H$
as usual) is of the form
\begin{equation}
A(a_0,\dotsc,a_8)=\begin{pmatrix} 0 & a_0 & a_1 & a_2 & a_3 & a_4\\
      & 0 & 0 & 0 & 0 & a_5\\
&  & 0 & 0  & 0 & a_6\\
&   &  & 0 & 0 & a_7\\
&  &  &  & 0 & a_8\\
&  &  &  &  & 0
\end{pmatrix}.
\end{equation}
The $\p^8$ of these matrices is the embedded tangent space to $\Gr(3,5)$
at the point corresponding to the hyperplane $H_L=V(p_{05})$ of $\p^{14}$,
\ie $\TT_L\Gr(3,5)$.
 In particular, since $\rk(A(a_0,\dotsc,a_8))\le 4$,
the $3$-space $\check\Delta$ is contained in $\check \Gr(1,5)$,
\ie $\check\Delta$
is a $\p^3$ of matrices of rank at most $4$.
Since the intersection of $\Gr(3,5)$ with its tangent space at a point  $P$
is a cone of vertex $P$ over $\p^1\times\p^3$, it has degree $4$ and dimension
$5$, hence, for
general $\Delta$, $\check\Delta\cap\Gr(3,5)$ is given by $4$ points. These
four points correspond to $4$ $\p^3$'s in $\p^5$, that result to be the
components
of the pure focal locus of $B'$, and the lines of $B'$ are the
quadrisecants of these $\p^3$'s.
We recall that a point in $\Gr(3,5)\cap\TT_L\Gr(3,5)$ represents a $\p^3$
intersecting $L$ along a $2$-plane. This implies that the 4 $\p^3$'s are
not in general position, but they intersect two by two in a line of $L$ and
three by three at a
point in $L$.
$L$  turns out to be a parasitical component of the
focal locus of $B'$. This parasitical component has multiplicity two since
through every point of $L$ there passes a $2$-dimensional family of lines of
$B'$.

\begin{rmk} This example was not considered in \cite{dm},
where only the case of a general hyperplane $H$ was studied.
Indeed in that article, in Proposition 5.3, we erroneously claimed
that, if a $3$-space  is contained in $\check \Gr(1,5)$, then the
corresponding focal locus has dimension $>3$. The same error is
contained in \cite{fm},  proof of Theorem 4.3 (as a matter of facts,
Proposition 5.3 of \cite{dm} is a
particular case of Theorem 4.3).
The present case is the only exception
to the quoted Theorem of \cite{fm} (see \cite{fm2}).
\end{rmk}

An explicit example can be taken with $H=V(p_{04}+p_{15})$, which gives the
matrix
\begin{equation}
A(d)=\begin{pmatrix} 0 & a+d & b & c & 0 & 0\\
      & 0 & 0 & 0 & 0 & 0\\
&  & 0 & 0  & 0 & a\\
&   &  & 0 & 0 & b\\
&  &  &  & 0 & c+d\\
&  &  &  &  & 0
\end{pmatrix}.
\end{equation}
The four singular hyperplanes are given by
$H_0=V(p_{25}-p_{45})$, $H_1=V(p_{02}+p_{03}+p_{25}+p_{35})$,
$H_2=V(p_{02}-p_{03}-p_{25}+p_{35})$, and $H_3=V(p_{01}-p_{03})$. They correspond
to the four $\p^3$'s, respectively, $L_0=V(x_5,x_2-x_4)$,
$L_1=V(x_0-x_5,x_2+x_3)$, $L_2=V(x_0+x_5,x_2-x_3)$, and $L_3=V(x_0,x_1-x_3)$.
%The four points in $L$ are $P_{012}=(0:1:0:0:0:0)$, $P_{013}=(0:1:-1:1:-1:0)$,
%$P_{023}=(0:1:1:1:1:0)$, and $P_{123}=(0:0:0:0:1:0)$.
$\II(B')$ modulo $\II(\Gr(1,5))$ is
\begin{multline}
A=(p_{01}+p_{45}, p_{02}+p_{35}, p_{03}+p_{45}, p_{25}-p_{45},\\
p_{13}p_{23}-p_{12}p_{24}-p_{23}p_{24}+p_{13}p_{34}).
\end{multline}
\end{ex}

\begin{ex}\label{ex:3}
If we intersect $\Gamma$ with
the hyperplane $H_L$, then---set-theoretically---$B=B''\cup \Gr(1,L)$,
but $ \Gr(1,L)$ has to be considered with
multiplicity two; the multidegree of $B''$ is then $(1,3,0)$.
The focal locus of $B''$ is in this case a scheme having support on
$L$, with $C$ as an embedded component; $B''$ is geometrically
given in the following way: consider an isomorphism from the pencil of
hyperplanes containing $L$, $\p^1_L$, to $C$,
$\phi\colon \p_L^1 \to C$. Then
\begin{equation}
B''=\overline{\cup_{H\in\p^1_L}\p^2_{\phi(H)}}
\end{equation}
where $\p^2_{\phi(H)}$ is the star of lines in $H$ with support in
the point $\phi(H)$.

In terms of coordinates, $\II(B'')$ modulo $\II(\Gr(1,5))$ is
\begin{multline}\label{eq:D0}
A_0=(p_{01}+p_{25}, p_{02}+p_{35}, p_{03}+p_{45}, p_{05},\\
p_{15}p_{34}-p_{25}p_{24}+p_{35}p_{23},\\
%p_{12}p_{04}-p_{23}p_{35}+p_{13}p_{45},\\
p_{23}^3-p_{13}p_{23}p_{24}+p_{12}p_{24}^2+p_{13}^2p_{34}-2p_{12}p_{23}p_{34}-
p_{12}p_{14}p_{34}),
\end{multline}
\ie $B''$ is a nonproper intersection of a quadratic and a cubic complex with
$\Gamma$.
\end{ex}

Next theorem collects the results  obtained in the three examples
above.

\begin{thm}\label{thm:es1}
Let $\Gamma$ be the intersection of $\Gr(1,5)$ with the dual of
the plane $\pi_t$ described by equation (\ref{eq:mat}). Let $B$
denote the congruence of lines $\Gamma\cap H$, where $H$ is a
hyperplane of $\p^{14}$. Let $L$ be the $3$-space of equations
$x_0=x_5=0$ and $C\subset L$ be the skew cubic defined by Equation \ref{cubic}.
\begin{enumerate}
\item If $H$ does not contain $\Gr(1,L)$, then $B$ is irreducible of
multidegree $(1,3,2)$. Its focal locus is $F=X\cup L'$, where $L'$
is a non-reduced scheme with $L'_{red}=L$, and $X$ is a lCM
threefold of degree $6$ with $\pi=1$, and with Hilbert polynomial
$P_X(t)=t^3+3t^2+2$. $X\cap L$ is a quartic surface which is
singular along the cubic curve $C$. $L'$ is a parasitical component
of $F$.
\item If $H$ is a general hyperplane containing $\Gr(1,L)$, then
$B=\Gr(1,L)\cup B'$, where $B'$ is a congruence of multidegree
$(1,3,1)$. The focal locus of $B'$ is the union of four $3$-spaces
and a double structure on $L$, which is a parasitical component.
The four $3$-spaces intersect two by two along a line of $L$ and three
by three at a point.
\item If $H=H_L$, the hyperplane which is tangent to  $\Gr(1,5)$ along
$\Gr(1,L)$, then $B=2\Gr(1,L)\cup B''$, where $B''$ is a
congruence of multidegree $(1,3,0)$. The focal locus of $B''$ is a
non-reduced scheme of support $L$, having $C$ as embedded
component.
 \end{enumerate}
\end{thm}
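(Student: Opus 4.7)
The plan is to verify the three cases of the theorem by a uniform strategy: first identify the irreducible components of $B=\Gamma\cap H$ together with their multiplicities, then use Lemma~\ref{lem:gp} together with the description of $\check\pi_t$ as the tangent hyperplane arrangement along the Veronese surface $\gamma(\pi_t)$ to pin down the focal locus and its parasitical part, and finally compute the multidegree $(1,a_1,a_2)$ by direct intersection counts on $\Gr(1,5)$. The fact that the first multidegree (the order) is always $1$ is immediate from Lemma~\ref{lem:gp}(1): through a general point $P\in\p^5\setminus L$ the lines of $\Gamma$ form a planar pencil, so a general $H$ cuts from this pencil a unique line.

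For case (1), a general hyperplane $H$ does not contain $\Gr(1,L)$, so $B$ is irreducible and its second and third multidegrees are computed by restricting $\Gamma$ to lines meeting a general $\p^4$ or a general $\p^3$; the values $3$ and $2$ are determined either by direct Schubert calculus or, explicitly, via the Macaulay2 computation outlined in Example~\ref{ex:1}. The focal locus splits as $F=X\cup L'$ because Lemma~\ref{lem:gp}(2)--(3) shows that every point of $L$ is a focal point (it lies on infinitely many lines of $\Gamma$) while a general line of $B$ does not meet $L$, so $L'$ is parasitical. What remains, the pure focal locus $X$, is forced to be a threefold of degree $6$: the degree follows from the fact that the lines of $B$ are the $4$-secants of $X$ combined with the multidegree $(1,3,2)$ and the quadruple-point formula. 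The sectional genus is then $1$, by Corollary~\ref{cor:one} (no threefold of degree $\leq 6$ with $\pi\neq 1$ admits $4$-secant lines forming a first order congruence). The lCM property, the Hilbert polynomial, and the description of $X\cap L$ as a quartic surface singular along $C$ can be read off the explicit resolution~\eqref{seq:lCM}, and the assertion about the Fitting ideal of the matrix $A$ guarantees that $X$ is lCM.

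For case (2), the assumption $H\supset\Gr(1,L)$ forces $B=\Gr(1,L)\cup B'$ set-theoretically; the residual $B'$ inherits order $1$ from $B$ since $\Gr(1,L)$ itself has order $0$ (its lines lie in $L$). The key geometric input is the identification of $\check\Delta=\check\Gamma\cap\check H$, for $H\neq H_L$, with a $3$-space contained in the embedded tangent space $\TT_L\Gr(3,5)\cong\p^8$ of skew-symmetric matrices of rank $\leq 4$; thus $\check\Delta\subset\check\Gr(1,5)$, and $\check\Delta\cap\Gr(3,5)$ consists of $4$ points by intersecting $\Gr(3,5)$ with a general $\p^3$ inside its tangent cone at the point $L$ (a cone over $\p^1\times\p^3$ of degree $4$). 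These four points yield four $3$-spaces in $\p^5$, each meeting $L$ in a plane, which accounts for the claimed incidence pattern (two by two along a line, three by three at a point). The multiplicity $2$ on $L$ in the focal locus reflects the $2$-parameter family of lines of $B'$ passing through each point of $L$.

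For case (3), $H_L$ is the unique hyperplane tangent to $\Gr(1,5)$ along $\Gr(1,L)$, and a local computation along a point of $\Gr(1,L)$ shows that $\Gr(1,L)$ appears in $\Gamma\cap H_L$ with multiplicity two, so $B=2\Gr(1,L)\cup B''$ as schemes; the residual multidegree $(1,3,0)$ follows by subtracting the class of $2\Gr(1,L)$ from that of $\Gamma\cap H_L$ in the Chow ring of $\Gr(1,5)$. The geometric description of $B''$ is then obtained by sending a hyperplane $H'\supset L$ to its image under the Veronese/Gauss map restricted to $\pi_t$, which identifies the pencil $\p^1_L$ with the cubic $C$ and exhibits $B''$ as the union of the stars $\p^2_{\phi(H')}$ of lines in $H'$ through $\phi(H')$. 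The main obstacle throughout is showing rigorously the schematic multiplicities and the fact that $X$ in case (1) is genuinely lCM of the asserted degree and sectional genus; this we handle by the explicit minimal free resolution~\eqref{seq:lCM} and its Fitting-ideal analysis, which together with the quadruple-point formula of \cite{LB} pin down all the numerical invariants.
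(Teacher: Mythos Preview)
Your proposal is essentially the paper's own argument: the theorem is explicitly stated there as a summary of the three Examples, and you reproduce their content---Lemma~\ref{lem:gp} for the order, the Macaulay2 computation and resolution~\eqref{seq:lCM} for case~(1), the identification of $\check\Delta$ with a $3$-space in $\TT_L\Gr(3,5)$ and the degree-$4$ cone $\p^1\times\p^3$ for case~(2), and the tangent-hyperplane/multiplicity-two analysis with the explicit description via $\phi\colon\p^1_L\to C$ for case~(3). One small caveat: in case~(1) your claim that the degree $6$ ``follows from the multidegree $(1,3,2)$ and the quadruple-point formula'' is not a valid deduction on its own (that formula takes $(d,\pi)$ as input, not output); in the paper, and in fact also in your next sentence, $\deg X=6$ is obtained directly from the Hilbert polynomial $P_X(t)=t^3+3t^2+2$ computed via the resolution, and only afterwards is Le~Barz's formula invoked to account for the parasitical $4$-secant.
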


We will consider now the quadratic complexes containing $\Gamma$,
always with the aim of finding first order congruences.
%\begin{ex}\label{ex:4}
So we take $\Gamma\cap Q$, where $Q$ is a quadratic complex. If
$Q$ is general, then $\Gamma\cap Q$ is an irreducible congruence
of order $2$, hence the focal locus has dimension $4$ by
Theorem~2.1 of \cite{DP6}. In order to get a reducible congruence,
having an irreducible component of order one, we require that $Q$
contains both $B''$ %of Example~\ref{ex:3}
and $\Gr(1,L)$. We will
prove now that quadrics of this type do exist.

\begin{thm}\label{thm:quadric} There exists a family of dimension $12$ of quadrics of
$\p^{14}$ containing $\Gr(1,L)\cup B''$. If $Q$ is such a quadric,
then $\Gamma\cap Q=\Gr(1,L)\cup B''\cup B$, where $B$ is a
congruence of multidegree $(1,3,3)$, that is irreducible for
general $Q$. The focal locus of $B$ is a threefold of degre $6$
with $\pi=1$.
\end{thm}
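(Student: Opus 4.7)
The plan is to combine a Chow-class calculation in $\Gr(1,5)$ with explicit Macaulay2 computations, in the spirit of Examples~\ref{ex:1}--\ref{ex:3}.

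I would first handle the dimension count. Using the explicit generators of $\II(\Gr(1,L))$ (cf.\ \eqref{eq:klein}) and of $\II(B'')$ from \eqref{eq:D0}, a direct Macaulay2 computation gives $h^0(\II_{\Gr(1,L)\cup B''}(2))$ in $\p^{14}$; modulo the $15$ Pl\"ucker quadrics, which vanish on all of $\Gr(1,5)$ and hence contribute nothing to $\Gamma\cap Q$, the expected value $13$ yields a projective family of dimension $12$.

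For a general $Q$ in this family, $Q$ does not contain $\Gamma$, so $\Gamma\cap Q$ is a proper $4$-dimensional intersection. In $H^*(\Gr(1,5))$ one has $[\Gamma]=\sigma_1^3=\sigma_3+2\sigma_{2,1}$ (the class of the codimension-$3$ linear section), and therefore
\begin{equation*}
[\Gamma\cap Q]=2\sigma_1\cdot[\Gamma]=2\sigma_1^4=2\sigma_{4,0}+6\sigma_{3,1}+4\sigma_{2,2},
\end{equation*}
so the $4$-cycle $\Gamma\cap Q$ has total multidegree $(2,6,4)$ and Pl\"ucker degree $28$. Now $\Gr(1,L)\subset\Gamma$, because for a line contained in $L=V(x_0,x_5)$ all Pl\"ucker coordinates $p_{0j}$ and $p_{j5}$ vanish, so the three equations \eqref{eq:pit} defining $\check\pi_t$ are satisfied automatically; and $B''\subset\Gamma$ by its construction in Example~\ref{ex:3}. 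Subtracting the classes $[\Gr(1,L)]=\sigma_{2,2}$ and $[B'']=\sigma_{4,0}+3\sigma_{3,1}$, each appearing with multiplicity one for generic $Q$ (a point to be checked in Macaulay2), leaves a residual $4$-cycle of multidegree $(1,3,3)$ and degree $16$, which we name $B$.

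Finally I would verify irreducibility of $B$ for a general $Q$ and analyze its focal locus. I take an explicit $Q$ with random rational coefficients in the $13$-dimensional space above, compute the saturation of $\II(\Gamma\cap Q)$ by $\II(\Gr(1,L)\cup B'')$ in Macaulay2, and check that the resulting ideal is prime with the Hilbert polynomial of a congruence of multidegree $(1,3,3)$; semicontinuity then propagates irreducibility to the general member. The focal locus $X$ of $B$ is computed from the same $Q$ as the image in $\p^5$ of the critical locus of the projection from the universal line of $B$; the expected output is an integral threefold of degree $6$, at which point the sectional genus $\pi=1$ is forced by the proposition immediately preceding this theorem. The main obstacle is to select $Q$ generic enough to avoid the discriminant locus inside the $12$-dimensional family, where $B$ could split further or its focal locus could acquire parasitical components; once such a $Q$ is exhibited, the remaining assertions propagate by standard semicontinuity.
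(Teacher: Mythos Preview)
Your approach is essentially the same as the paper's. The multidegree computation by subtracting the classes $[\Gr(1,L)]$ and $[B'']$ from $[\Gamma\cap Q]=2\sigma_1^4$ is exactly what the paper does (stated more tersely there as ``obtained simply by subtracting''), and both you and the paper defer irreducibility to a Macaulay2 check, then deduce $\deg X=6$, $\pi=1$ from the results of Section~\ref{sec:1} (the paper cites Corollary~\ref{cor:one}; your ``proposition immediately preceding this theorem'' is really the proposition in Section~\ref{sec:1} following that corollary).

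The one substantive difference is in the dimension-$12$ claim. The paper does not compute $h^0(\II_{\Gr(1,L)\cup B''}(2))$; instead it writes the general relevant quadric explicitly as
\[
q=-d\bigl(p_{15}p_{34}-p_{25}p_{24}+p_{35}p_{23}\bigr)+p_{05}\cdot(\text{a linear form in }12\text{ Pl\"ucker coordinates}),
\]
a $13$-parameter family, hence projectively of dimension $12$. Two remarks on your version: first, modding out only by the $15$ Pl\"ucker quadrics is not quite the right normalisation, since any quadric in $\II(\Gamma)_2$ (which also contains the products of the three linear forms \eqref{eq:pit} with arbitrary linear forms) cuts $\Gamma$ trivially; the honest quotient is by $\II(\Gamma)_2$, and this is what yields the $13$-dimensional vector space. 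Second, the paper's explicit formula for $q$ is not merely cosmetic: it is precisely what is matched, in Theorem~\ref{thm:BMA}, with the homogenised Monge--Amp\`ere equation~\eqref{eq:det}, so if you replace it by a bare Macaulay2 dimension count you lose the bridge to Section~\ref{sec:3}.
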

\begin{proof}
The equation $q$ of the quadric $Q$ must be chosen in $A_0$ of
Equation \eqref{eq:D0}, \ie it must be a combination of the
polynomials $p_{01}+p_{25}, p_{02}+p_{35}, p_{03}+p_{45}, p_{05},$
and $p_{15}p_{34}-p_{25}p_{24}+p_{35}p_{23}$.
%$p_{12}p_{04}-p_{23}p_{35}+p_{13}p_{45}$.
Let us observe that in this case $Q$ automatically contains the
linear span of $\Gr(1,L)$. Let us consider $Q\cap \check\pi_t$:
its ideal, modulo $\II(\check\pi_t)$, is generated by  $p_{05}$
and $p_{12}p_{04}-p_{23}p_{35}+p_{13}p_{45}$, therefore the family
of congruences we are looking for is the residual of
$B''\cup\Gr(1,L)$ in the intersection
\begin{equation}\label{eq:ideal}
Q(d;a_1,\dotsc,a_6;b_1\dotsc,b_5;c)\cap \Gamma =
V(p_{01}+p_{25},p_{02}+p_{35},p_{03}+p_{45},q)
\end{equation}
where
\begin{multline*}
q=q(d;a_1,\dotsc,a_6;b_1\dotsc,b_5;c):=-d(%p_{12}p_{04}-p_{23}p_{35}+p_{13}p_{45})+\\
p_{15}p_{34}-p_{25}p_{24}+p_{35}p_{23})+\\
+p_{05}(a_1p_{12}+a_2p_{13}+a_3p_{14}+a_4p_{23}+a_5p_{24}
+a_6p_{34}+\\
-(b_1p_{15}+b_2p_{25}+b_3p_{35}+b_4p_{45})+b_5p_{04}-cp_{05}).
\end{multline*}
The multidegree of $B$ is obtained simply by subtracting to the
one of $\Gamma\cap Q$, which is $(2,6,4)$, those of $\Gr(1,L)$ and
of $B''$. The fact that in general $B$ is an irreducible
congruence
 has been checked by a direct computation with Macaulay 2. By
Corollary~\ref{cor:one} it follows that the focal locus has degree
$6$ and sectional genus $1$.
\end{proof}

%the choice of the terms of this quadric will be clear in the next section.

\begin{rmk} We observe that $Q$ can have rank equal to $8,7,6$ or (at most)
$2$. It has rank $6$ if and only if $a_i=b_j=c=0$, $\forall i,j$
and $d\neq 0$. If $d=0$, the quadric has rank at most $2$. The
general case of rank $8$ is obtained if and only if $d\neq 0$ and
at least one among $a_1,a_2,a_3,b_4,b_5$ is not zero. Finally, if
$d\neq 0$ and $a_1=a_2=a_3=b_4=b_5=0$ (but at least one of the
other terms is different from zero), the quadric has rank $7$.

Clearly, the case of rank $2$ corresponds to reducible quadratic
congruences (which contain the tangent linear congruence of
Theorem~\ref{thm:es1}, case (3)).
\end{rmk}

We will describe in detail the focal locus of the congruence $B$
in the next section.

% Let us now show examples of congruences associated to the quadrics
% with the other ranks:

% If the rank is $6$, $\II(B)$ modulo $\II(\Gr(1,5))$ is
% \begin{multline}
% A_1=(p_{01}+p_{25}, p_{02}+p_{35}, p_{03}+p_{45},\\
% p_{12}p_{04}-p_{23}p_{35}+p_{13}p_{45}, \\
% p_{13}^2-p_{12}p_{23}-p_{12}p_{14},
% p_{13}p_{23}-p_{12}p_{24}, p_{23}^2-p_{12}p_{34}, p_{23}p_{24}-p_{13}p_{34},
% p_{24}^2-p_{23}p_{34}-p_{14}p_{34});
% \end{multline}

% An example with a quadric of rank $7$ can be obtained with $a_4\neq 0$
% and  $\II(B)$ modulo $\II(\Gr(1,5))$ is
% \begin{multline}
% A_2=(p_{01}+p_{25}, p_{02}+p_{35}, p_{03}+p_{45}\\
% p_{12}p_{04}-p_{23}p_{35}-p_{35}^2+p_{13}p_{45}+p_{25}p_{45},\\
% p_{13}^2-p_{12}p_{23}-p_{12}p_{14}+p_{13}p_{25}-p_{12}p_{35}, \\
% p_{13}p_{23}-p_{12}p_{24}+p_{13}p_{35}-p_{12}p_{45}, \\
% p_{23}^2-p_{12}p_{34}+p_{23}p_{35}, \\
% p_{23}p_{24}-p_{13}p_{34}+p_{23}p_{45},\\
% p_{24}^2-p_{23}p_{34}-p_{14}p_{34}-p_{34}p_{35}+p_{24}p_{45}).
% \end{multline}

% An example with a quadric of rank $8$ can be obtained with $a_1\neq 0$
% and  $\II(B)$ modulo $\II(\Gr(1,5))$ is
% \begin{multline}
% A_3=(p_{01}+p_{25}, p_{02}+p_{35}, p_{03}+p_{45},\\
% p_{12}p_{04}-p_{25}^2-p_{23}p_{35}+p_{15}p_{35}+p_{13}p_{45},\\
% p_{13}^2-p_{12}p_{23}-p_{12}p_{14}+p_{12}p_{15}, \\
% p_{13}p_{23}-p_{12}p_{24}+p_{12}p_{25}, \\
% p_{23}^2-p_{12}p_{34}+p_{12}p_{35}, \\
% p_{23}p_{24}-p_{13}p_{34}+p_{12}p_{45}, \\
% p_{24}^2-p_{23}p_{34}-p_{14}p_{34}-p_{25}^2-p_{23}p_{35}+p_{15}p_{35}+p_{13}p_{45}).
% \end{multline}

\section{On the completely exceptional Monge-Amp\`ere
  Equations}\label{sec:3}

For the definitions and generalities about systems of conservation
laws and their correspondence with congruences of lines, we refer
to \cite{AF2}, see also \cite{dm}.
 An important class of strictly
hyperbolic PDE's of conservation laws are the \emph{completely
exceptional Monge-Amp\`ere equations}, which are introduced and
studied in \cite{B}.
 It is shown there that they are \emph{linearly degenerate}, which means that
 the eigenvalues
 of the Jacobian matrix associated to the system
 are constant along the rarefaction curves.
 In \cite{AF2}, it is asserted that  this class is even
a \emph{$T$-system}, \ie that the rarefaction curves are lines,
and it is suggested that this could be an example of a non-linear
$T$-system. We will show that this is the case for systems of the
fourth order; a similar statement can be proven analogously in
general.

The completely exceptional Monge-Amp\`ere systems of the fourth
order are defined as follows (see \cite{AF2},
    Section~5). Introduce the \emph{Henkel (or persymmetric) matrix}
\begin{equation*}
H:=\begin{pmatrix}
\frac{\partial^4 u}{\partial x^4}&\frac{\partial^4 u}{\partial
    x^3\partial t}&\frac{\partial^4 u}{\partial x^2\partial t^2}\\
\frac{\partial^4 u}{\partial x^3\partial t}&\frac{\partial^4
    u}{\partial x^2\partial t^2}&\frac{\partial^4 u}{\partial x\partial
    t^3}\\
\frac{\partial^4 u}{\partial x^2\partial t^2}&\frac{\partial^4
u}{\partial x\partial t^3}&\frac{\partial^4 u}{\partial t^4},
\end{pmatrix}
\end{equation*}
 and consider the PDE of the fourth
order
\begin{equation}\label{eq:1}
d\det(H)+a_1((\frac{\partial^4 u}{\partial
x^4}\frac{\partial^4 u}{\partial x^2\partial
t^2}-(\frac{\partial^4 u}{\partial
    x^3\partial t})^2))+\dotsb+b_1\frac{\partial^4 u}{\partial x^4}+\dotsb +c =0.
\end{equation}
\ie a linear combination of the minors of all orders of $H$ (where
we suppose $d\neq 0$). After the introduction of the new variables
$(u_1,u_2,u_3,u_4,u_5)$ such that
\begin{align*}
u_1&=\frac{\partial^4 u}{\partial x^4},&
u_2&=\frac{\partial^4 u}{\partial x^3\partial t},&
u_3&=\frac{\partial^4 u}{\partial x^2\partial t^2},&
u_4&=\frac{\partial^4 u}{\partial x\partial t^3},&
u_5&=\frac{\partial^4 u}{\partial t^4},
\end{align*}
Equation~\eqref{eq:1} becomes
  \begin{equation}\label{eq:det}
d\det \begin{pmatrix}
u_1&u_2&u_3\\
u_2&u_3&u_4\\
u_3&u_4&u_5\\
\end{pmatrix}+a_1(u_1u_3-u_2^2)+\dotsb+b_1u_1\dotsb+c=0,
\end{equation}
moreover
\begin{align}\label{eq:ui}
(u_1)_t&=(u_2)_x,& (u_2)_t&=(u_3)_x,& (u_3)_t&=(u_4)_x,& (u_4)_t&=(u_5)_x.
\end{align}
This is a system of conservation laws, and the corresponding
congruence $B_{MA}$ in $\p^5$, according to the Agafonov-Ferapontov
construction \cite{AF2} is (in non-homogeneous coordinates
$(y_0,\dotsc,y_4$))
\begin{align*}
y_1&=u_1y_0-u_2\\
y_2&=u_2y_0-u_3\\
y_3&=u_3y_0-u_4\\
y_4&=u_4y_0-u_5
\end{align*}
together with Equation \eqref{eq:det},
or, using projective coordinates $(y_0:\dotsb:y_5)$ in $\p^5$ and
$(u_0:\dotsb:u_5)$ as parameters for the lines in $B_{MA}$, it is given by
\begin{align*}
%(u_0y_0&=u_0y_0)\\
u_0y_1&=u_1y_0-u_2y_5\\
u_0y_2&=u_2y_0-u_3y_5\\
u_0y_3&=u_3y_0-u_4y_5\\
u_0y_4&=u_4y_0-u_5y_5.\\
%(u_5y_5&=u_5y_5).
\end{align*}

\begin{thm}\label{thm:BMA}
The congruence $B_{MA}$ corresponding to a completely exceptional
Monge-Amp\`ere system coincides with a congruence $B$ of Theorem
\ref{thm:quadric}. In particular $B_{MA}$ has  multidegree
$(1,3,3)$.
\end{thm}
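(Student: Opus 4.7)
The plan is to parametrise the Plücker coordinates of the generic line of $B_{MA}$ in terms of $(u_0:u_1:\dotsb:u_5)$ and verify directly that (a) the three linear equations of $\check\pi_t$ in \eqref{eq:pit} hold identically, and (b) the Monge-Amp\`ere equation \eqref{eq:det}, suitably homogenised, pulls back from a quadric in the family described in the proof of Theorem~\ref{thm:quadric}. Once these two facts are established, $B_{MA}$ must sit inside some $\Gamma\cap Q$ as in that theorem, and a short argument identifies it with the new component~$B$.

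First I would pick two points on each line of $B_{MA}$ by specialising $(y_0,y_5)$ to $(1,0)$ and $(0,1)$ in the parametric equations, obtaining $P_1=(u_0:u_1:u_2:u_3:u_4:0)$ and $P_2=(0:u_2:u_3:u_4:u_5:-u_0)$. A direct Plücker computation gives $p_{0i}=u_0 u_{i+1}$ for $i=1,\dotsc,4$, $p_{05}=-u_0^2$, $p_{i5}=-u_0u_i$ for $i=1,\dotsc,4$, while the six $p_{ij}$ with $1\le i<j\le 4$ are precisely the $2\times 2$ minors of the Henkel matrix $H$ (for instance $p_{12}=u_1u_3-u_2^2$, $p_{34}=u_3u_5-u_4^2$). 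In particular $p_{01}+p_{25}=p_{02}+p_{35}=p_{03}+p_{45}=0$, so $B_{MA}\subset\Gamma$.

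Next I homogenise Equation~\eqref{eq:det} with respect to $u_0$ to get a cubic form $F(u_0,\dotsc,u_5)$, and show that $u_0 F$ equals the pullback $\psi^*(q)$, under the rational map $\psi\colon u\mapsto\ell(u)$, of some quadric $q$ in the family \eqref{eq:ideal}. Expanding the Henkel determinant along its first row yields $\det(H)=u_1p_{34}-u_2p_{24}+u_3p_{23}$, whence the identity $u_0\det(H)=-(p_{15}p_{34}-p_{25}p_{24}+p_{35}p_{23})$ accounts for the $d$-term of $q$. After multiplication by $u_0$, each remaining term of $F$ picks up a factor $u_0^2=-p_{05}$, becoming $-p_{05}p_{jk}$ for the quadratic minors, $p_{05}^2$ for the constant, and $p_{05}$ times the appropriate Plücker coordinate for each linear term in $u_i$. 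Matching coefficients term by term identifies the parameters $(d,a_i,b_j,c)$ of the PDE with a specific element of the $12$-dimensional family of quadrics described in Theorem~\ref{thm:quadric}, thereby exhibiting $B_{MA}$ inside the corresponding intersection $\Gamma\cap Q$.

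Finally, by Theorem~\ref{thm:quadric} we have $\Gamma\cap Q=\Gr(1,L)\cup B''\cup B$. The identity $u_0F=\psi^*(q)$ shows set-theoretically that $\psi^{-1}(Q)=\{u_0=0\}\cup\{F=0\}=\Gr(1,L)\cup B_{MA}$, so $B_{MA}\subset B''\cup B$. Since $B_{MA}$ is irreducible of order one (being the congruence associated to a completely exceptional Monge-Amp\`ere, hence $T$-type, system) and its generic line does not meet $L$—a point $\alpha P_1+\beta P_2$ lies in $L=V(x_0,x_5)$ only if $\alpha u_0=\beta u_0=0$, impossible for $u_0\neq 0$—it cannot coincide with $B''$, whose focal locus is supported on $L$. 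Therefore $B_{MA}=B$, and the multidegree $(1,3,3)$ is inherited from Theorem~\ref{thm:quadric}. I expect the technical heart of the argument to lie in the coefficient-matching step, where careful accounting of powers of $u_0$ and signs in the Plücker expressions is needed for the identification to go through cleanly.
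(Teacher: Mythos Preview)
Your proposal is correct and follows essentially the same route as the paper: pick two points on the generic line, compute Pl\"ucker coordinates, verify the three linear equations of $\check\pi_t$ and identify the homogenised Monge--Amp\`ere cubic with a quadric $q$ of the family in Theorem~\ref{thm:quadric}, then rule out the components $\Gr(1,L)$ and $B''$ to conclude $B_{MA}=B$. The paper's write-up is terser---it simply observes that the Pl\"ucker coordinates satisfy \eqref{eq:ideal} and that the degree-$4$ homogenisation of \eqref{eq:det} \emph{is} the equation of $Q$---whereas you spell out the Henkel-determinant expansion and the coefficient matching, which is helpful detail.

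One small point: your exclusion of $\Gr(1,L)$ and $B''$ via ``the generic line does not meet $L$'' is exactly the paper's argument in disguise, since a line meets $L=V(x_0,x_5)$ if and only if $p_{05}=0$, and both $\Gr(1,L)$ and $B''$ lie in $V(p_{05})$ (see \eqref{eq:klein} and \eqref{eq:D0}). So you do not need to invoke that $B_{MA}$ is irreducible of order one ``because it comes from a $T$-system'': that appeal is circular here, as the $T$-system property for this class is precisely what the paper establishes \emph{after} Theorem~\ref{thm:BMA} (via Proposition~\ref{pr:pl} and its Corollary). The $p_{05}\neq 0$ observation alone gives $B_{MA}\subset B$, and then equality follows from dimension and the irreducibility of $B$ already proved in Theorem~\ref{thm:quadric}.
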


\begin{proof} To obtain the equations of $B_{MA}$ in the Pl\"ucker
embedding, we notice that the line corresponding to the parameters
$(u_0:\dotsb:u_5)$ joins the points $(u_0:\dotsb:u_4:0)$ and
$(0:-u_2:\dotsb:-u_5:u_0)$. Hence we can compute its Pl\"ucker
coordinates taking the $2\times 2$-minors of the matrix
\begin{equation*}
M=\begin{pmatrix}
u_0 &u_1&\dotsb &u_4 &0\\
0&-u_2&\dotsb&-u_5&u_0
\end{pmatrix}.
\end{equation*}
We see immediately that they satisfy the equations in
\eqref{eq:ideal} of $\Gamma\cap Q=\Gr(1,L)\cup B''\cup B$. In
particular, Equation~\eqref{eq:det}, made homogeneous of degree
$4$, becomes the equation of the quadric $Q$. The components
$\Gr(1,L)$ and $B''$ are both contained in the hyperplane
$V(p_{05})$, but the coordinate  $p_{05}$ of a line in $B_{MA}$
has $p_{05}\neq 0$. This implies that $B_{MA}=B$, in particular
the multidegree of $B_{MA}$ is $(1,3,3)$. \end{proof}

\begin{rmk} Note that the degree of $B$ in the Pl\"ucker embedding is
$16$. \end{rmk}

% The computation
% of the primary decomposition has been done with Macaulay.

% \begin{ex}
% If we take the determinant, \ie $a_i=b_j=c=0$, $d=1$, the ideal of $B$ is
% \begin{multline*}
% D_2=(p_{01}+p_{25}, p_{02}+p_{35}, p_{03}+p_{45},
% p_{24}^2-p_{23}p_{34}-p_{14}p_{34},
% p_{23}p_{24}-p_{13}p_{34},p_{23}^2-p_{12}p_{34},\\
% p_{23}p_{14}-p_{13}p_{24}+p_{12}p_{34}, p_{13}p_{23}-p_{12}p_{24},
% p_{23}p_{04}-p_{03}p_{24}-p_{34}p_{35},
% p_{23}p_{15}-p_{13}p_{25}+p_{12}p_{35},\\
% p_{24}p_{05}-p_{04}p_{25}+p_{03}p_{35}, p_{03}^2-p_{34}p_{05}+p_{04}p_{35},
% p_{12}p_{03}-p_{23}p_{25}+p_{13}p_{35},
% p_{24}p_{15}-p_{23}p_{25}-p_{14}p_{25}+p_{13}p_{35},\\
% p_{14}p_{05}-p_{04}p_{15}+p_{03}p_{25}, p_{23}p_{5}-p_{03}p_{25}-p_{35}^2,
% p_{13}^2-p_{12}p_{23}-p_{12}p_{14},
% p_{13}p_{05}-p_{03}p_{15}-p_{25}p_{35},\\
% p_{13}p_{04}-p_{03}p_{14}-p_{34}p_{25},
% p_{3}p_{23}-p_{34}p_{25}+p_{24}p_{35},
% p_{12}p_{05}-p_{25}^2+p_{15}p_{35},p_{12}p_{04}-p_{24}p_{25}+p_{14}p_{35},\\
% p_{03}p_{13}-p_{24}p_{25}+p_{23}p_{35}+p_{14}p_{35},
% p_{34}p_{15}-p_{24}p_{25}+p_{23}p_{35}).
% \end{multline*}
% \end{ex}

By the above description of the congruences $B$ and $B_{MA}$, we
can find their focal loci in the following way. By definition, the
focal locus is the image of the ramification divisor of the map
$f\colon\Lambda\to \p^5$, where $\Lambda\subset B\times \p^5$ is
the incidence variety and $f$ is the restriction of the
projection. We can work locally, so we can choose local
coordinates $u_1,\dotsc,u_4$ on an open (analytical) subset of $B$
(since by \eqref{eq:det} it  can be written as
$u_5=g(u_1,\dotsc,u_4)$) and $y_0,\dotsc,y_4$ as affine
coordinates on $\p^5$, so $y_0,u_1,\dotsc,u_4$ are local
coordinates on an open subset of $\Lambda$. With this choice, the
Jacobian matrix of $f$ is
\begin{equation*}
Df=
\begin{pmatrix}
\partial f/\partial y_0\\
\partial f/\partial u_1\\
\vdots\\
\partial f/\partial u_4\\
\end{pmatrix}=
\begin{pmatrix}
1 & u_1 & u_2 & u_3 & u_4 \\
0 & y_0 & 0 & 0 & -g_1 \\
0 & -1 & y_0 & 0 & -g_2 \\
0 & 0 & -1 & y_0 & -g_3 \\
0 & 0 & 0 & -1 & y_0-g_4
\end{pmatrix}
\end{equation*}
where $g_i=\partial g/\partial y_i$. Then, the ramification
divisor is just the closure of $V(\det(Df))\cap \Lambda$ and to find the focal
locus $X$ we have to eliminate the variables $u_i$'s. Actually,
the computations were made by observing that
\begin{equation}\label{eq:maca}
\det(Df)=
\det
\begin{pmatrix}
y_0 & 0 & 0 & 0 & h_1\\
-1 & y_0 & 0 & 0 & h_2 \\
0 & -1 & y_0 & 0 & h_3 \\
0 & 0 & -1 & y_0 & h_4 \\
0 & 0 & 0 & -1 & h_5
\end{pmatrix}
\end{equation}
where $h$ is the polynomial defined in \eqref{eq:det} and
$h_i=\partial h /\partial u_i$, \ie the matrix is the matrix of the partial
derivatives of the polynomials defining $\Lambda$ with respect to
$u_1,\dotsc, u_5$. Equation~\eqref{eq:maca} can be shown by some simple but
tedious calculations or by implicit function arguments.

With the help of Macaulay $2$ we get:
\begin{prop} The focal
locus $X$ of $B_{MA}$ has the same invariants as the sextic
threefold of Example~\ref{ex:1}: its Hilbert polynomial is
$P_X(t)=t^3+3t^2+2$ and its singular locus is a twisted cubic. Its
ideal sheaf has the same resolution as in \eqref{seq:lCM}, and by
the same reasons it is lCM.
\end{prop}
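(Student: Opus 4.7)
The plan is to carry out the focal-locus computation outlined just before the proposition: identify $B_{MA}$ with the congruence $B$ of Theorem~\ref{thm:quadric} (via Theorem~\ref{thm:BMA}), parametrize the incidence variety $\Lambda$ locally by $(y_0,u_1,\dotsc,u_4)$ using the hypersurface relation $u_5=g(u_1,\dotsc,u_4)$ coming from \eqref{eq:det}, and then explicitly compute the ramification divisor of the projection $f\colon\Lambda\to\p^5$ via the Jacobian determinant \eqref{eq:maca}. From there, $X$ is obtained by eliminating the variables $u_1,\dotsc,u_4$ from the incidence equations together with $\det(Df)=0$.

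First I would justify the determinantal identity \eqref{eq:maca}: both matrices differ by elementary row/column operations that use the hypersurface relation $h=0$, so the two determinants agree on $\Lambda$; a short implicit-function argument (differentiating $u_5=g(u_1,\dotsc,u_4)$ with respect to each $u_i$ and comparing with $\partial h/\partial u_i$) makes this precise and avoids having to grind through explicit partial derivatives of $g$. Next I would run the elimination of $u_1,\dotsc,u_5$ from the ideal generated by the four linear incidence relations, the equation $h=0$ (i.e.\ \eqref{eq:det}), and the determinant \eqref{eq:maca}: this is a finite computation whose output is the ideal $I(X)\subset k[y_0,\dotsc,y_4]$ (or, after homogenization, in $k[y_0,\dotsc,y_5]$). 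I would carry this out in Macaulay~2 for generic choices of the coefficients $d,a_i,b_j,c$ in \eqref{eq:det}.

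From the resulting ideal the Hilbert polynomial $P_X(t)=t^3+3t^2+2$ is read off directly, and a minimal free resolution is computed; the claim is that it has exactly the shape \eqref{seq:lCM}, namely
\begin{equation*}
0\to\OO(-8)\to\OO^{6}(-7)\xrightarrow{A}\OO^{16}(-6)\to\OO^{22}(-5)\to\OO^{12}(-4)\to\II_X.
\end{equation*}
Local Cohen-Macaulayness then follows exactly as in Example~\ref{ex:1}: inspect the first nonzero Fitting ideal of the map $A$ in \eqref{seq:lCM} and verify that its support is empty (equivalently, the ideal of $5\times 5$ minors of $A$ is irrelevant), using the criterion of~\cite{RS}. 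For the singular locus, I would compute the ideal of $4\times 4$ minors of the Jacobian of a set of generators of $I(X)$, simplify modulo $I(X)$, and check that the resulting scheme is the twisted cubic $C$ defined by \eqref{cubic} inside $L=V(y_0,y_5)$ (matching what happens in Example~\ref{ex:1} where the singular locus is precisely $C$).

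The main obstacle will be the elimination step: the system is large enough that a naive Gr\"obner basis computation may not terminate quickly, so one may need to work in a well-chosen monomial order (e.g.\ lex with $u_i\succ y_j$) or to exploit the observation in the proof of Theorem~\ref{thm:BMA} that $p_{05}\neq 0$ along $B_{MA}$ to dehomogenize and reduce the number of variables. A secondary subtlety is genericity: one should argue that the coefficients $(d,a_i,b_j,c)$ yielding a generic quadric $Q$ in Theorem~\ref{thm:quadric} correspond to a completely exceptional Monge-Amp\`ere equation with generic coefficients, so that the computed invariants coincide with those already established for Example~\ref{ex:1}. Once these two points are dealt with, the remaining verifications (Hilbert polynomial, Betti numbers, Fitting support, singular ideal) are mechanical Macaulay~2 checks.
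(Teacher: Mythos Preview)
Your proposal is correct and follows essentially the same approach as the paper: the paper's entire proof is the sentence ``With the help of Macaulay $2$ we get'' preceding the proposition, so the argument is exactly the computational pipeline you describe (set up the incidence variety, compute $\det(Df)$ via \eqref{eq:maca}, eliminate the $u_i$, then read off Hilbert polynomial, resolution, Fitting ideal, and singular locus in Macaulay~2). Your write-up is in fact more explicit than the paper about what each Macaulay~2 check verifies and about the genericity caveat, which the paper does not address.
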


\begin{rmk} For particular choices of the quadric in \eqref{eq:ideal} the
threefold $X$ can degenerate: for instance, if $Q$ is a quadric
such that $a_2=\dotsb a_6=b_1=\dotsb=b_5=c=0$, then $X$ is a
scheme with support a cone over a twisted cubic. \end{rmk}

\begin{prop}\label{pr:pl}
The lines of the congruence $B$ passing through a general focal point
$P\in X$ form a planar pencil.
\end{prop}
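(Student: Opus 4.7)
The plan is to invoke Theorem~\ref{thm:BMA} to identify $B$ with $B_{MA}$, and then work in the explicit parametrisation of $B_{MA}$ to show that all lines of $B_{MA}$ through an arbitrary point $P$ have directions lying in a common $2$-plane; the statement then follows from a dimension count at focal points.

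Recall that a line of $B_{MA}$ with parameters $(u_0:u_1:\ldots:u_5)$ joins the points $(u_0:u_1:\ldots:u_4:0)$ and $(0:-u_2:\ldots:-u_5:u_0)$. Setting $u_0=1$ and the last projective coordinate equal to $1$, this line is parametrised by $t\mapsto (t,\,u_1t-u_2,\,u_2t-u_3,\,u_3t-u_4,\,u_4t-u_5)$. Fixing $P=(P_0,P_1,\ldots,P_4)\in X$ in these affine coordinates, the condition $P\in\ell$ is a triangular linear system in $u_2,\ldots,u_5$ that I would solve recursively to obtain $u_{i+1}=u_1P_0^{i}-\sum_{j=1}^{i}P_jP_0^{i-j}$ for $i=1,\ldots,4$. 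In particular each $u_i$ (for $i\ge 2$) is affine-linear in the remaining free parameter $u_1$, with coefficients depending only on $P$.

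Substituting into the tangent vector $(1,u_1,u_2,u_3,u_4)$ of the parametrised line, I would observe that the direction takes the form $v+u_1w$, with $w=(0,1,P_0,P_0^2,P_0^3)$ and $v=(1,0,-P_1,-P_1P_0-P_2,-P_1P_0^2-P_2P_0-P_3)$. Since the first entries of $v$ and $w$ are $1$ and $0$, these vectors are linearly independent, so all directions of lines of $B_{MA}$ through $P$ lie in the fixed $2$-dimensional subspace $\gen{v,w}$. Consequently every such line is contained in the affine $2$-plane $\Pi_P:=P+\gen{v,w}$. Thus the lines of $B_{MA}$ through \emph{any} point $P$ are automatically contained in the planar pencil centred at $P$ in $\Pi_P$, regardless of the further constraint imposed by the quadric defining $B_{MA}$.

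Finally, for a general focal $P\in X$ I would invoke the dimension count on the incidence variety $\Lambda$: the ramification divisor $R$ of the projection $\Lambda\to\p^5$ has dimension $4$, while $X$ has dimension $3$, so the fibres of $R\to X$ are at least $1$-dimensional over a general point of $X$. Hence a $1$-parameter family of lines of $B$ passes through $P$, and by the preceding step this family sits inside the $1$-dimensional pencil of lines through $P$ in $\Pi_P$; equality of dimensions forces the two to coincide, giving exactly a planar pencil. The only point requiring attention is the verification that the recursive substitution really keeps each $u_i$ affine-linear in $u_1$ (so that the tangent vector has the shape $v+u_1w$), which is immediate from the triangular form of the constraints; I therefore foresee no serious obstacle.
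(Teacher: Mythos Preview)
Your argument is correct and ultimately rests on the same two ingredients as the paper's proof: (i) every line of $B$ through a point $P\notin L$ lies in a fixed plane through $P$ (this is case~(1) of Lemma~\ref{lem:gp}), and (ii) a general focal point lies on infinitely many lines of $B$. The paper simply cites Lemma~\ref{lem:gp} and the definition of a focal point; you instead re-derive (i) by an explicit affine computation and phrase (ii) as a dimension count on the ramification divisor of $\Lambda\to\p^5$.

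One remark: the detour through Theorem~\ref{thm:BMA} is unnecessary and, as stated, slightly delicate (the theorem asserts that every $B_{MA}$ is a $B$, not the converse). In fact your computation in steps~2--3 uses only the four linear relations $P_i=u_iP_0-u_{i+1}$, which encode nothing more than the three hyperplane equations of $\check\pi_t$ (i.e.\ membership in $\Gamma$); the quadric $Q$ never enters. So the statement that the directions span $\langle v,w\rangle$ is really a statement about $\Gamma_P$, and therefore holds for \emph{any} $B\subset\Gamma$ without passing through the Monge--Amp\`ere identification. This is exactly the content of Lemma~\ref{lem:gp}(1), proved there by the rank computation on the matrix $A$; your recursive substitution gives an alternative, equally valid, derivation.
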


\begin{proof}
It follows immediately from Lemma~\ref{lem:gp}, observing that,
since  $P$ is general, it is not contained in $L$, and that, since
it is focal, it belongs to infinitely many lines of $B$.
\end{proof}

\begin{cor}
The system of conservation laws defined by \textup{\eqref{eq:det}}
and \textup{\eqref{eq:ui}} is a non linearizable $T$-system, \ie
the corresponding congruence of lines is not contained in a linear
congruence.
\end{cor}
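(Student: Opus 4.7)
The plan is a short multidegree comparison, using the explicit numerics already computed for $B_{MA}$ in the previous section. By Theorem~\ref{thm:BMA} the congruence $B_{MA}$ coincides with one of the congruences $B$ of Theorem~\ref{thm:quadric}, so (for general coefficients of the equation) it is irreducible, four-dimensional and of multidegree $(1,3,3)$; equivalently, its class in the Chow ring of $\Gr(1,5)$ is $\sigma_{4,0}+3\sigma_{3,1}+3\sigma_{2,2}$ and its degree in the Pl\"ucker embedding is $16$ (Remark after Theorem~\ref{thm:BMA}).

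Suppose for contradiction that $B_{MA}$ is contained in a linear congruence $L$. By definition $L=\Gr(1,5)\cap\Delta$ with $\dim\Delta=10$, so $L$ is $4$-dimensional with class $\sigma_1^4=\sigma_{4,0}+3\sigma_{3,1}+2\sigma_{2,2}$, i.e.\ multidegree $(1,3,2)$. Since $B_{MA}$ and $L$ have the same dimension and $B_{MA}$ is irreducible, $B_{MA}$ must be an irreducible component of $L$; writing $[L]=[B_{MA}]+[L']$ in the Chow group of $4$-cycles on $\Gr(1,5)$ with $[L']$ effective, the Schubert coefficient of $\sigma_{2,2}$ in $[L']$ would be $2-3=-1$, contradicting effectivity. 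Equivalently, the Pl\"ucker degree $14=\deg\Gr(1,5)$ of $L$ bounds the degree of every one of its irreducible components, contradicting $\deg B_{MA}=16$.

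This establishes the non linearizability; the $T$-system part of the statement is already given by Proposition~\ref{pr:pl}, which shows that through a general focal point the lines of $B_{MA}$ form a planar pencil. There is essentially no obstacle: everything rests on the multidegree (equivalently, Pl\"ucker degree) of $B_{MA}$ computed in Theorem~\ref{thm:quadric}, together with the irreducibility guaranteed there by the genericity of the quadric $Q$ associated to \eqref{eq:det}--\eqref{eq:ui}.
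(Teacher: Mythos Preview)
Your proof is correct and follows essentially the same approach as the paper: both argue that the system is a $T$-system via Proposition~\ref{pr:pl} (the planar pencil property) and that non-linearizability follows from irreducibility together with the multidegree $(1,3,3)$ computed in Theorem~\ref{thm:BMA}. The paper states this last implication in one line without justification, whereas you spell it out explicitly via the Chow class $\sigma_1^4=(1,3,2)$ of a linear congruence (or the degree bound $14<16$); this added detail is helpful but not a different idea. One small point: to conclude that the planar pencil property implies the system is of Temple class, you should explicitly invoke the characterization from \cite{AF2}, Section~2, as the paper does---Proposition~\ref{pr:pl} alone gives the geometric condition, not the translation back to PDE language.
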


\begin{proof}
It has been shown, for example in \cite{AF2}, Section~2, that a
$T$-system is characterized by the fact that each focus of the
corresponding congruence $B$ is a fundamental point, and moreover
that the lines of $B$, passing through a general focal point, form a
planar pencil. In our case, in view of Proposition \ref{pr:pl}, it
is enough to note that the congruence is not contained in any linear
congruence, because it has multidegree $(1,3,3)$ by Theorem
\ref{thm:BMA}, and  is irreducible.
\end{proof}

We conclude by observing some interesting geometric properties of
$B$. As a subvariety of $\Gr(1,5)$ it is embedded in $\p^{14}$ and
it results to be a $4$-dimensional variety covered by lines, \ie
the planar pencils of Proposition~\ref{pr:pl}.

\begin{thm}
Let $B$ be an irreducible congruence of lines in $\p^5$ of
multidegree $(1,3,3)$ constructed as in Theorem~\ref{thm:quadric}
for a general choice of the quadric $Q$.
\begin{enumerate}
\item Through a general point
of $B$ there pass $4$ lines contained in $B$.
\item $B$ is a smooth Fano fourfold in $\p^{11}$ of index
$2$, of degree $16$ and sectional genus $9$.
\end{enumerate}
\end{thm}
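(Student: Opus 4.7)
Part~(1) follows from focal-geometric considerations. A general line $\ell$ of a first order congruence in $\p^5$ carries exactly $n-1=4$ foci, counted with multiplicity, which for a general $Q$ are distinct points of $\ell$. By Proposition~\ref{pr:pl}, through each focus on $\ell$ the lines of $B$ form a planar pencil; and since lines of $\Gr(1,5)\subset\p^{14}$ are precisely the planar pencils of lines of $\p^5$, each focus produces exactly one line of $B$ through $[\ell]\in B$. Conversely, any line of $B$ through $[\ell]$ is a planar pencil whose base point, being incident to infinitely many lines of the congruence, must lie in the focal locus of $\ell$; hence all four lines of $B$ through $[\ell]$ arise this way.

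For part~(2) the degree is a short Chow-ring computation on $\Gr(2,6)$. The class of $B$ is $[B]=\sigma_4+3\sigma_{3,1}+3\sigma_{2,2}$ by the definition of multidegree, while iterated Pieri gives $\sigma_1^4=\sigma_4+3\sigma_{3,1}+2\sigma_{2,2}$. Self-duality of the Schubert basis in codimension four then yields
\begin{equation*}
\deg_{\p^{14}}(B)=[B]\cdot\sigma_1^4=1+9+6=16.
\end{equation*}
The inclusion $B\subset\p^{11}$ follows from $B\subset\Gamma\subset\check\pi_t\cong\p^{11}$, and nondegeneracy of $B$ in $\p^{11}$, together with smoothness of $B$, are verified on the explicit ideal~\eqref{eq:ideal} for a general member of the $12$-dimensional family of quadrics by Macaulay~2, extending the computations already used in the proof of Theorem~\ref{thm:quadric}.

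It remains to establish the Fano structure. The quickest route is an identification with a known model: a smooth nondegenerate fourfold in $\p^{11}$ of degree $16$ covered by lines with exactly four lines through a general point matches the invariants characterising Mukai's prime Fano fourfold of coindex $3$ and genus $9$, realised as a $4$-dimensional linear section of the $6$-dimensional homogeneous variety $\Sigma_9\subset\p^{13}$. Matching $B$ with this model forces $-K_B=2H$, with $H$ the Plücker hyperplane class and $\operatorname{Pic}(B)=\Z\cdot H$, so that $B$ is Fano of index $2$; the sectional genus then follows by adjunction,
\begin{equation*}
2g-2=(K_B+3H)\cdot H^3=H^4=16, \qquad g=9.
\end{equation*}

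The main obstacle is the computation of the canonical class. A naive adjunction on $\Gr(1,5)$, treating $\check\pi_t\cap Q\cap\Gr(1,5)$ as a complete intersection of type $(1,1,1,2)$, yields $K=-H$ and hence would suggest index~$1$; but $B$ is merely the residual component of $\Gr(1,L)\cup B''$ inside this intersection, so obtaining the true value $-K_B=2H$ requires either liaison (computing $\omega_B=\omega_Y(-Z)|_B$ with $Y=\check\pi_t\cap Q\cap\Gr(1,5)$ and $Z$ the union of the other components) or the identification with Mukai's homogeneous model sketched above.
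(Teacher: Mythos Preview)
Your argument for part~(1) is essentially the paper's: four foci on a general line, each producing a planar pencil, hence four lines of $B$ through the point $[\ell]$. Your Chow-ring computation of $\deg B=16$ is a genuine improvement over the paper, which simply reads the degree off a Macaulay~2 computation.

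The divergence is in how you reach ``Fano of index~$2$'' and ``sectional genus~$9$''. You try to establish $-K_B=2H$ first (by matching with Mukai's model, or by liaison) and then deduce $g=9$ via adjunction. The paper goes the other way, and this is both simpler and avoids the gap in your outline: it computes the Hilbert polynomial of $B$ with Macaulay~2, reads off the sectional genus $g=9$ directly, and then observes that the general curve section is a curve of degree~$16$ and genus~$9$ spanning $\p^8$, hence a canonical curve. From $\OO_C(1)\cong\omega_C$ and iterated adjunction $\omega_C=(\omega_B\otimes\OO(3H))|_C$ one gets $\omega_B\equiv\OO(-2H)$ on curve sections, whence index~$2$.

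Your proposed route has a real gap: the ``identification with Mukai's model'' is not a proof unless you supply a characterisation theorem saying that the invariants you list (smooth, nondegenerate in $\p^{11}$, degree~$16$, four lines through a general point) determine the variety up to isomorphism, and you do not. The liaison alternative you sketch would work in principle, but you have not carried it out, and it is more laborious than simply computing the Hilbert polynomial once you are already invoking Macaulay~2 for smoothness. In short: reverse the logical order, get $g=9$ from the Hilbert polynomial, and the Fano index follows from the canonical-curve-section argument without needing to recognise $B$ a priori.
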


\begin{proof}
The first assertion follows from the fact that on a general line
of $B$ there are $4$ foci (see \cite{DP4}, Proposition~2.1), and
through each of these foci there is a planar pencil of lines of
$B$. As for the second assertion, we have performed the proof with
the help of Macaulay 2 on a general example. We have computed the
equations and the Hilbert polynomial for $B$, obtaining in
particular that its sectional genus is $9$. So the general curve
section of $B$ is a curve in $\p^8$ of degree $16$ and genus $9$
hence is a canonical curve, and $B$ results to be a Fano fourfold
in $\p^{11}$ of index $2$.

For this general choice we have checked that $B$ is smooth.
\end{proof}

It would be desirable to have a more theoretic proof of the preceding theorem.
Nevertheless, we think that this result is interesting because it gives
another interpretation of the Fano fourfold of index two and genus $9$.
This Fano fourfold $B$ belongs to the list of Mukai, see for example
\cite{Mu}, who gives an embedding in $\Gr(2,5)$, the Grassmannian of
planes in $\p^5$. Our result shows that on $B$ there is a rank two vector
bundle giving an embedding in $\Gr(1,5)$. We plan to return to this
argument in a subsequent paper.

\providecommand{\bysame}{\leavevmode\hbox to3em{\hrulefill}\thinspace}

\end{document}